\documentclass[11pt]{article}
\usepackage{epsfig}
\usepackage{amssymb,amsmath,amsthm,amscd}
\usepackage{amsfonts, mathabx}
\usepackage{latexsym}
 \usepackage{bbm,dsfont}

 \usepackage[notref,notcite]{showkeys}

\pagestyle{plain}
\setlength{\textheight}{8.3in}
\setlength{\textwidth}{6.5in}
\setlength{\evensidemargin}{0 in}
\setlength{\oddsidemargin}{0in}
\setlength{\topmargin}{0.0in}
\setlength{\parskip}{1ex}
\setlength{\parindent}{1em}

\newtheorem{thm}{Theorem}[section]
\newtheorem{prop}[thm]{Proposition}

\newtheorem{lem}[thm]{Lemma}

\newtheorem{defn}[thm]{Definition}

\newcommand{\be}{\begin{equation}}
\newcommand{\ee}{\end{equation}}

\newcommand{\Z}{\mathbb{Z}^N}
\newcommand{\R}{\mathbb{R}}
\newcommand{\N}{\mathbb{N}}
\newcommand{\E}{\mathbb{E}}

\def \eps {{ \varepsilon }}

\def \calf {{  {\mathcal{F}} }}

\def \cala {{  {\mathcal{A}}  }}

\def \cale {{  {\mathcal{E}}  }}
\def \calg {{  {\mathcal{G}}  }}

 \def \calt {{  {\mathcal{T}}  }}

\begin{document}

\begin{titlepage}
\title{\bf 
Large Deviation Principles
 of Stochastic Reaction-Diffusion   Lattice Systems
 }
\vspace{7mm}

\author{
 Bixiang Wang  
\vspace{1mm}\\
Department of Mathematics, New Mexico Institute of Mining and
Technology \vspace{1mm}\\ Socorro,  NM~87801, USA \vspace{1mm}\\
Email: bwang@nmt.edu\vspace{6mm}\\
  }

\date{}
\end{titlepage}

\maketitle

\medskip

\begin{abstract}
 This paper is concerned with the large deviation principle
 of the   stochastic reaction-diffusion  lattice systems 
 defined on the $N$-dimensional  integer set, where the nonlinear
 drift term is locally Lipschitz continuous with polynomial growth
 of any degree and the nonlinear diffusion term is locally
 Lipschitz continuous with linear growth.
 We first prove the convergence of the solutions of the
 controlled stochastic lattice  systems, and then establish the large
 deviations by the weak convergence method based on
 the equivalence of the  large deviation principle
  and the Laplace
 principle.
  \end{abstract}

{\bf Key words.}    Large deviation principle; 
Laplace principle;  weak convergence;
lattice system.

 {\bf MSC 2020.} Primary: 60F10; 37L55; Secondary  34F05,  39A50,   60H10.

\baselineskip=1.25\baselineskip

\section{Introduction} 
\setcounter{equation}{0}

In this paper, we investigate the
large deviation principle
of  the  non-autonomous stochastic
reaction-diffusion
 lattice system defined on the
 $N$-dimensional  integer
 set $\Z$.
 Given   $i\in   \Z$, consider the 
 It\^{o} stochastic system:
 \be\label{intr1}
 d u^\eps_i(t)
 + \nu (Au^\eps(t))_i
  dt 
 +F_i (t, u^\eps_i(t) ) dt  
= \sqrt{\eps}
 \sum_{k=1}^\infty 
  \sigma_{k,i}   (t, u^\eps_i (t))
  dW_k, \quad t>0,
\ee
 with  initial data
 \be\label{intr2}
 u^\eps_i(0)=u_{0,i},
 \ee
  where 
  $i=(i_1,\ldots, i_N)\in \Z$,
$u=(u_i)_{i\in \Z}$
is an unknown sequence,
  $\nu>0$ and $\eps\in (0,1)$  are 
constants, 
$A$ is the  negative discrete 
$N$-dimensional Laplace operator
defined on $\Z$, 
  and 
   $ (W_k)_{k\in \N}$  is a 
sequence of  independent   real-valued 
standard Wiener 
processes on a probability space.  

For every $i\in \Z$, the nonlinear
function
 $F_i:  \R^+\times \R \to \R$  
 is a  locally Lipschitz function with polynomial
 growth of any degree
 with respect to the second argument.
 For the diffusion coefficients,
 we will assume that
 for every $i\in \Z$  and
 $k\in \N$, 
$  \sigma_{k,i}: \R^+\times \R \to \R$ 
 is a locally 
 Lipschitz function with linear 
 growth 
 with respect to the second argument.

Lattice  systems  can be used to describe
the dynamics of physical systems with
discrete structures, including
electric circuits,
pattern formation
and propagation of nerve pulses
 \cite{bel2, chua1, chua2,
ern1, kap1, kee1, kee2}.
Such systems also occur by discretizing partial differential
equations in space variables defined on unbounded domains.
The solutions of 
both deterministic and stochastic lattice
systems have been investigated    by many experts.
For  deterministic lattice systems, the
traveling waves, chaotic solutions and global
attractors have been studied in 
 \cite{afr1, bates1, bates2,  cho2, elm1, elm2, ern1, zin1}, 
   \cite{cho1, cho3, cho4}
  and  
\cite{bates3,    bey1,  han3, kara1, mori1,  wan8},
respectively.
For stochastic lattice systems,
the random attractors
have been reported  in    \cite{bat1, bat3, 
  car7, car8, han1, han2}.
  Recently, the 
   invariant measures
   and periodic measures of
   stochastic lattice systems
   have 
   been examined in
  \cite{clw,chen1,chen2,
  lww1, lww2, wan2019, rwan1, xwan1}
  and the references therein.
  In the present paper, we study the
  large deviation principle of the
  stochastic lattice system
  \eqref{intr1}-\eqref{intr2}.
  
  The large deviation principle  of
  stochastic systems  
 is concerned with the exponential decay
 of distributions  of
 solutions
 on  tail events as $\eps \to 0$, which has 
    been
  investigated in
  \cite{dem1, fre2, str1,
  var1, var2} and the references therein.
There are two basic approaches to deal with   
the 
  large deviations 
  of stochastic partial differential equations:
  the classical method and the weak convergence method.
 The classical method is based on 
  the 
   discretization and approximation arguments
   along with     uniform exponential probability estimates, 
 see, e.g.,  \cite{bra1, card1, cerr1, che1, chow1, 
  far1, fre1, gau1, kall1, mart1, sow1}.
  The weak convergence method 
   is based on the equivalence of
  large deviation principles and Laplace principles
  as well as variational representations of
  positive functions of infinite-dimensional Brownian motions,
  see, e.g., 
   \cite{bess1, brz1, bud1, bud2,
  cerr2, cerr3, chu1, duan1, dup1, liu1, ort1, roc1,sal1}.
  
The   large deviation principle
has  been well developed  for finite-dimensional
 dynamical
systems generated by stochastic ordinary
differential equations and infinite-dimensional dynamical
systems generated by stochastic partial differential equations.
However,
as far as the author is aware, it seems that
  there is not a result available in the literature
   on large deviations for
  infinite-dimensional lattice systems.
  The goal of the present
   paper is to investigate this problem
  and
  establish the  
  large deviation principle  of the infinite-dimensional
  lattice system \eqref{intr1}-\eqref{intr2} by employing
  the weak convergence method
  as introduced in \cite{bud1, bud2, dup1}. 
   One of the advantages of the weak convergence method
  lies in the fact that it does not require the
  uniform exponential probability estimates of solutions.

  Note that the lattice system \eqref{intr1}
  on $\Z$
  can be considered as spatial discretization
  of the corresponding  reaction-diffusion partial differential equation
  on $\R^n$.
  The large deviations
  of reaction-diffusion equations in bounded domains 
  have been studied
  in \cite{cerr1, cerr2, hai1}
  when the nonlinear drift term is locally Lipschitz
  continuous
  with polynomial growth and the diffusion term is globally
  Lipschitz continuous.
 In the present paper, we  will
 deal with  the infinite-dimensional
  lattice system \eqref{intr1}
  with polynomial drift term and  locally Lipschitz 
  diffusion term.
  The reader is referred to \cite{cerr1a, che1, fre1, pes1, sow1}
  for large deviations of reaction-diffusion equations
  in bounded domains with globally Lipschitz drift terms.

   We will recall basic concepts of large deviation
   principles and Laplace principles in the next section,
   and discuss the well-posedness of system
   \eqref{intr1}-\eqref{intr2} in Section 3. 
   We finally prove the large deviation principle of
   \eqref{intr1}-\eqref{intr2} in Section 4.

 \section{Large deviation theory}
 
 In this section, we review the large deviation
 principle and the Laplace principle of a family
 of random variables
 based on the weak convergence method
 as  introduced
 in \cite{bud1, dup1}.

   Let $(\Omega, \mathcal{F}, 
   \{ { \mathcal{F}} _t\} _{t\ge 0},  P )$
be a  complete filtered probability 
space satisfying the usual condition.
Suppose $\{W(t)\}_{t\ge 0} $ is 
a cylindrical Wiener process
with identity covariance operator 
in a separable Hilbert space $H$
with respect to 
$(\Omega, \mathcal{F}, 
   \{ { \mathcal{F}} _t\} _{t\ge 0},  P )$,
   which means
   that there exists another separable 
   Hilbert space $U$
   such that
    the embedding
  $H\hookrightarrow U$ is   Hilbert-Schmidt
  and $W(t)$ takes values in $U$.

 Let $\cale$ be a polish space, and for every
 $\eps>0$,
 $\calg^\eps: C([0,T], U) \to \cale$ be a
 measurable map. Denote by
 \be\label{pre0}
 X^\eps = \calg ^\eps (W),
 \quad \forall \ \eps>0.
\ee
  We will investigate the large deviation principle
  of $X^\eps$ as $\eps \to 0$.
  To that end, we first recall some notation from
  \cite{bud1}.

Given $N>0$, denote  by
  \be\label{pre0a}
S_N
=\{v\in L^2(0,T; H): \int_0^T \| v(t)\|_H^2 dt
\le N\}.
\ee
Then $S_N$ is a polish space
endowed with the weak topology.
Throughout this paper, we always assume
$S_N$ is equipped with the weak topology,  unless
otherwise stated.
Let $\cala$
 be the space of all
 $H$-valued stochastic processes
 $v$
 which are progressively measurable
 with respect to $\{\calf_t\}_{t\in [0,T]}$
 and
 $\int_0^T \| v (t)\|^2 dt<\infty$
 $P$-almost surely.
 Denote by
  \be\label{pre0b}
 \cala_N
 =\{v\in \cala: v (\omega) \in S_N
 \ \text{for almost  all } \omega \in \Omega
 \}.
\ee
  
 The large deviation principle
 of  the family $\{X^\eps\}$
 is concerned with the exponential decay
 of distributions  of
 $\{X^\eps\}$
 on  tail events as $\eps \to 0$.
 Such exponential decay
 is  characterized by a rate function 
 $I: \cale \to [0, \infty]$.
 
 \begin{defn}
 A function $I : \cale \to [0, \infty]$
 is called a rate function
 on $\cale$  if
 it is lower  semi-continuous in $\cale$.
 A rate function $I$ on $\cale$ is 
 said to be a good
 rate function on $\cale$ if for every
 $0\le C <\infty $, the level set
 $\{x\in \cale: I(x) \le C\}$ is a compact
 subset of $\cale$.
 \end{defn}
 
 \begin{defn}
 The family $\{X^\eps\}$ is said to satisfy the
 large deviation principle
 in $\cale$ with a
 rate function $I:\cale \to [0,\infty]$ if 
 for every  Borel subset $B$ of $\cale$,
 $$
 -\inf_{x\in B^\circ}
  I(x)
  \le \liminf _{\eps \to 0}
  \eps \log 
  P(X^\eps \in B )
  \le
     \limsup  _{\eps \to 0}
  \eps \log 
  P(X^\eps \in B )
  \le
   -\inf_{x\in \overline{B} }
  I(x).
  $$
  where $B^\circ$ and $\overline{B}$ are
  the interior and the closure of $B$ in $\cale$,
  respectively.
 \end{defn}
 
 Since $\cale$ is a polish space,
 it is well known  that
 the family $\{X^\eps\}$
 satisfies the  
 large deviation principle
 on $\cale$ with a
 rate function $I:\cale \to [0,\infty]$ 
 if and only if
 $\{X^\eps\}$
 satisfies the  
 Laplace  principle
 on $\cale$ with the same 
 rate function.
 The concept of    Laplace principle of 
 $\{X^\eps\}$ is given    below.

  \begin{defn}
 The family $\{X^\eps\}$ is said to satisfy the
 Laplace  principle
 in  $\cale$ with a
 rate function $I:\cale \to [0,\infty]$ if 
 for all bounded  and
 continuous  $H: \cale \to \R$, 
 $$
\lim _{\eps \to 0}
  \eps \log 
  \E \left (
  e^{-{\frac 1\eps} H(X^\eps) }
  \right )
  =
  -\inf_{x\in \cale}
  \left \{
  H(x) +I(x)
  \right \}.
  $$
  \end{defn}
  
  In order to prove the large deviation principle
  of $X^\eps$, we will assume that
  the family $\{\calg^\eps\}$  fulfills the following
  conditions: 
  there exists a measurable map
  $\calg^0:    C([0,T], U) \to \cale$
  such that
  \begin{enumerate}
  \item[(\bf {H1})] \    If $N<\infty$ and
  $\{v^\eps\}\subseteq \cala_N$
  such that $\{v^\eps\}$
  converges  in distribution
  to $v$ as 
  $S_N$-valued random variables,
  then
  $\calg^\eps \left (
  W +\eps^{-\frac 12} \int_0^{\cdot}
  v^\eps (t) dt
  \right )$
  converges in distribution
  to 
    $\calg^0 \left (
    \int_0^{\cdot}
  v  (t) dt
  \right )$.
  
  \item[(\bf {H2})]   \  For every $N<\infty$, the
  set  $\left \{
  \calg^0 (\int_0^{\cdot} v(t) dt):\
  v \in S_N
  \right \}$
  is a compact subset of $\cale$.
    \end{enumerate}

  Define $I: \cale \to [0, \infty]$ by,
  for every $x\in \cale$,
 \be\label{pre1}
  I(x)= \inf 
  \left \{
  {\frac 12} \int_0^T \| v(t)\|^2_H dt:\
   v\in L^2(0,T; H)
   \ \text{such that}\  
  \calg^0 \left (\int_0^\cdot v(t) dt\right ) =x
  \right \},
 \ee
  with the convention that
  the infimum over an  empty set
   is taken to be $\infty$.
  By assumption ${\bf (H2)}$, we find that
  every level set of the map  $I$
  as defined by \eqref{pre1} is a compact subset
  of $\cale$,
  which further implies 
  the lower semi-continuity of $I$.
 By definition,    this map $I$ is a good
  rate function on $\cale$.
  Moreover, under ${\bf (H1)}$ and
  ${\bf (H2)}$, 
  the family $\{X^\eps\}$ satisfies the Laplace
  principle in $\cale$ with rate function $I$
  as stated below
  (see, \cite[Theorem 4.4]{bud1}).
  
  \begin{prop}\label{LP1}
  If $\{\calg^\eps\}$ satisfies
    {\rm {({\bf H1})-({\bf H2})}}, then
    the family
    $\{X^\eps\}$ as given by \eqref{pre0}
    satisfies  
    the Laplace
  principle in $\cale$ with rate function $I$
  as defined by \eqref{pre1}.
  \end{prop}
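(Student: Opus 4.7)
The plan is to deduce this Laplace principle from the Bou\'e--Dupuis variational representation of exponential functionals of the cylindrical Wiener process $W$. Applied with $F = H\circ \calg^\eps$, this representation gives
\be\label{prop1}
-\eps \log \E\left(e^{-\frac 1\eps H(X^\eps)}\right)
= \inf_{v\in \cala}\E\left[\frac 12 \int_0^T \|v(t)\|_H^2\, dt + H\left(\calg^\eps\left(W+\eps^{-\frac 12}\int_0^{\cdot} v(s)\,ds\right)\right)\right],
\ee
so it suffices to show that this infimum converges to $\inf_{x\in \cale}\{H(x)+I(x)\}$ as $\eps\to 0$. I would split this into the Laplace lower bound ($\limsup \le$) and the Laplace upper bound ($\liminf \ge$) for $-\eps\log\E(\cdot)$.

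For the Laplace lower bound, fix $\delta>0$ and pick $x_0\in \cale$ with $H(x_0)+I(x_0)\le \inf_{x\in\cale}\{H(x)+I(x)\}+\delta$. By the definition of $I$ in \eqref{pre1}, there exists $v_0\in L^2(0,T;H)$ such that $\calg^0\left(\int_0^{\cdot} v_0(s)\,ds\right) = x_0$ and $\frac 12 \int_0^T\|v_0(t)\|_H^2\,dt \le I(x_0)+\delta$. Viewing $v_0$ as a deterministic element of $\cala_N$ for $N$ sufficiently large, hypothesis {\bf (H1)} implies that $\calg^\eps\left(W+\eps^{-\frac 12}\int_0^{\cdot} v_0\,ds\right)$ converges in distribution to $x_0$. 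Since $H$ is bounded and continuous, substituting $v_0$ as a test control in \eqref{prop1} and letting $\eps\to 0$ yields the desired inequality.

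For the Laplace upper bound, select $\eps$-near optimal controls $v^\eps\in \cala$ in \eqref{prop1}. Using the boundedness $\|H\|_\infty\le M$, one obtains an a priori expected $L^2$-bound on $v^\eps$, and by truncating $v^\eps$ on the event where $\int_0^T\|v^\eps(t)\|_H^2\,dt$ exceeds a fixed threshold one may assume $v^\eps\in\cala_N$ for a large $N$. The weak compactness of $S_N$ and Prohorov's theorem then furnish a subsequence along which $v^\eps$ converges in distribution (as an $S_N$-valued random variable) to some $v\in \cala_N$. Applying {\bf (H1)} gives joint convergence in distribution of the pair $\left(v^\eps, \calg^\eps\left(W+\eps^{-\frac 12}\int_0^{\cdot} v^\eps\,ds\right)\right)$ to $\left(v,\calg^0\left(\int_0^{\cdot} v\,ds\right)\right)$. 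Combining Fatou's lemma, the continuity and boundedness of $H$, and the weak lower semicontinuity of $v\mapsto \int_0^T\|v(t)\|_H^2\,dt$ on $S_N$, one arrives at
\be\label{prop2}
\liminf_{\eps\to 0}\left[-\eps\log\E\left(e^{-\frac 1\eps H(X^\eps)}\right)\right]
\ge \E\left[\frac 12\int_0^T\|v(t)\|_H^2\,dt + H\left(\calg^0\left(\int_0^{\cdot} v(s)\,ds\right)\right)\right]
\ge \inf_{x\in \cale}\{H(x)+I(x)\}.
\ee

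The principal obstacle is the Laplace upper bound: justifying that $\eps$-optimal controls can be constrained to a fixed $\cala_N$, setting up the joint weak-in-distribution convergence of the controlled pair (typically via Skorokhod representation on an enlarged probability space so that {\bf (H1)} can be applied pathwise), and transferring the variational cost to the limit without losing the infimum in \eqref{pre1}. The Laplace lower bound, by contrast, requires only a single deterministic test control and is essentially a direct application of {\bf (H1)}.
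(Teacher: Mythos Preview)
The paper does not supply its own proof of this proposition; it is quoted directly from Budhiraja--Dupuis \cite{bud1} (Theorem~4.4). Your sketch is exactly the argument given there: the Bou\'e--Dupuis variational representation reduces the Laplace principle to the two one-sided bounds, the lower bound is handled by a deterministic near-optimal control together with {\bf (H1)}, and the upper bound goes through truncation to $\cala_N$, tightness in $S_N$, and {\bf (H1)} applied along a convergent subsequence. So your approach coincides with the proof the paper is invoking.

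One small comment: your outline uses only {\bf (H1)}. This is not an oversight---as the paper itself notes just before the proposition, {\bf (H2)} is what guarantees that $I$ is a \emph{good} rate function (compact level sets), but it plays no role in the Laplace bounds themselves. The technical points you flag (truncation to $\cala_N$ at a cost controlled by $\|H\|_\infty$, and passing to a Skorokhod representation so that {\bf (H1)} applies with almost-sure convergence of controls) are handled in \cite{bud1} exactly as you describe.
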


\section{Well-posedness of stochastic lattice systems} 
\setcounter{equation}{0}
 
 In this section,  we 
 discuss the existence  and uniqueness
 of solutions  to system \eqref{intr1}-\eqref{intr2},
 which is  needed for establishing the
 large deviation principle of the solutions.

   Let $(\Omega, \mathcal{F}, 
   \{ { \mathcal{F}} _t\} _{t\in \R},  P )$
be a  complete filtered probability 
space satisfying the usual condition, and
   $ (W_k)_{k\in \N}$  is a 
sequence of  independent  real-valued 
standard Wiener 
processes  
defined on  $(\Omega, \mathcal{F}, 
\{ { \mathcal{F}} _t\} _{t\in \R},  P )$.
Denote by   
$$
 \ell^2 = \left \{
 u = (u_i)_{i \in \Z} \ : \  \sum\limits_{i \in \Z} | u_i
 |^2 <   \infty \right \} 
 $$
 with   norm $ \| \cdot \|$ and  inner product
 $ (\cdot , \cdot )$,  respectively. 
 Recall that  the negative discrete
 $N$-dimensional 
 Laplace operator $A:\ell^2\to \ell^2$
   is given by, for every $u=(u_i)_{i\in \Z}\in \ell^2$
   and $i=(i_1,i_2,\dots,i_N)\in\Z$,
$$
    (Au)_i=-u_{(i_1-1,i_2,\dots,i_N)}
 -u_{(i_1,i_2-1,\dots,i_N)} -\dots-u_{(i_1,i_2,\dots,i_N-1)}
  $$
  $$
  +2Nu_{(i_1,i_2,\dots,i_N)}
-u_{(i_1+1,i_2,\dots,i_N)}-u_{(i_1,i_2+1,\dots,i_N)}
 -\dots-u_{(i_1,i_2,\dots,i_N+1)}\big).
$$
For convenience, for every $j=1,\ldots, N$,  
define $A_j, B_j, B^*_j :\ell^2\to \ell^2$
  by, for   $u=(u_i)_{i\in \Z}\in \ell^2$
   and $i=(i_1,i_2,\dots,i_N)\in\Z$,
   $$
   (A_ju)_i= -u_{(i_1,\dots,i_j+1,\dots,
 i_N)}+2u_{(i_1,\dots,i_j,\dots,i_N)}-u_{(i_1,\dots,i_j-1,\dots,i_N)},
   $$ 
   $$
    (B_ju)_i=u_{(i_1,\dots,i_j+1,\dots, i_N)}
-u_{(i_1,\dots,i_j,\dots,i_N)},
$$
and 
   $$
     (B_j^{*}u)_i=u_{(i_1,\dots,i_j-1,\dots, i_N)}-u_{(i_1,\dots,i_j,\dots,i_N)}.
     $$
Then we have,
$$
A=\sum_{j=1}^N A_j,
\quad   A_j=B_jB_j^{*}=B_j^{*}B_j.
$$
 Given $u=(u_i)_{i\in \Z}\in \ell^2$, denote by
 $$
 Bu =(B_1u,\ldots, B_N u)
 \quad \text{and}
 \quad
 \| Bu\|= \left (
 \sum_{j=1}^N \| B_j u\|^2
 \right )^{\frac 12} .
 $$

 In the sequel,   we   assume 
 that  for every $i\in \Z$,
 the nonlinear function
 $F_i:\R^+ \times \R
\to \R $ satisfies,
 \be\label{F0}
 F_i (t,s )  = F_0 (s) 
    - g_i (t) ,
    \quad \forall \ t\in \R^+, \ s\in \R,
  \ee
    where  
$F_0:  \R
\to \R $ is  continuously differentiable  and 
     there exists $\gamma \le 0 $ such that
\be\label{F1}
F_0( 
0) =0  \quad \text{and} 
\quad    F_0^\prime  (  s) 
   \ge  \gamma
\ \text{  for  all } \ s\in \R.
\ee 
Note that  for every $k\in\N$,
the classical nonlinearity
$F_0(s)= s^{2k+1} -s$ for the reaction-diffusion
equation indeed satisfies condition \eqref{F1}.

 For the sake of convenience,  we  write
$$  
f(s) = F_0(
s) -\gamma s \   \text{ for all } 
    s \ \in \R.
$$
It follows from \eqref{F1} that
\be\label{f1}
f( 0) = 0 \ \text{ and } \ 
  f^\prime (s) 
  \ge 0 \ \text{  for all } 
s\in \R.
\ee

Given  $ u=(u_i)_{i \in \Z} \in \ell^2$, denote by
 $ {f} (u) =  \left ( f( u_i) \right )_{i \in \Z}$.
 By \eqref{f1} we find  that   
  for every $R>0$, 
  there exists a constant $L_R>0  $
 such that 
 for all $u, v \in \ell^2$ with $\|u \| \le R$
 and $\| v\| \le R$,
\be\label{f1a1}
 \| {f} ( u)- {f} ( v)\|\leq
L_R   \|u-v\|. 
\ee
Moreover, 
\be\label{f2}
(f(u)-f(v), u-v) \ge 0\quad \text{for all } \ 
u,\ v \in \ell.
\ee

For the nonlinear diffusion term,
we assume that
for every $i\in \Z$  and $k\in \N$,
$\sigma_{k,i}: \R^+ \times \R
\to \R$ satisfies
\be\label{sigma0}
\sigma_{k,i}(t,s)
= 
 h_{k,i} (t) +\delta_{k,i} \sigma_k^0 (s),
 \quad \forall \ t\in \R^+, \ s\in \R,
\ee
 where   
$\sigma^0_k: \R \to \R$  is  locally Lipschitz  continuous
in the sense that
  for any  bounded interval $I$,  there exists
a constant $L_I>0$  such that 
\be\label{sigma1}
|\sigma^0_k (s_1) -\sigma^0_k (s_2) |
\le L_I |s_1 -s_2|, \quad \text{for all } \ s_1, s_2
\in I,   \  k\in \N.
\ee
We further 
  assume that there exists 
   $\alpha>0$ such that
\be\label{sigma2}
|\sigma^0_k (s)| \le \alpha(1 + |s|), \ \ \text{ for all } \ s\in \R
\quad \text{and } \  \   k\in \N.
\ee 
 For   the   sequence $\delta
=(\delta_{k,i})_{k\in \N, i\in \Z}$
in \eqref{sigma0} we  
      assume 
\be\label{delta1}
\| \delta\|^2 =
  \sum_{k\in \N}\sum_{i\in \Z} | \delta_{k,i}|^2 <\infty.
\ee
  For each $k\in \N$, define  
an operator $\sigma_k: \ell^2 \to \ell^2$ by
\be
\label{defsigma}
\sigma_k  (u) = (\delta_{k,i} \sigma^0_k(u_i))_{i\in \Z},
\quad \text{for all } \ u=(u_i)_{i\in \Z} \in \ell^2 .
\ee
Then by  
 \eqref{sigma2}, \eqref{delta1}
 and \eqref{defsigma}  we see that  
\be\label{sigrow1}
\sum_{k\in \N}
\| \sigma_k (u) \|^2
\le 
2 \alpha^2  \| \delta \|^2
 (1+ \| u \|^2), \quad  \text{ for  all }  u\in \ell^2.
\ee 
Furthermore,  by \eqref{sigma1},
we find that 
 for every $R>0$,  there exists 
$L_R >0$    such that
for all $u, v \in \ell^2$ with $\| u\|\le R$
and $\|v\|\le R$,
\be\label{sigrow2}
\sum_{k\in \N}
\| \sigma_k (u) -\sigma_k (v) \|^2
\le L_R \| \delta \|^2 \| u-v\|^2.
\ee

For the sequences
  $g(t)=(g_i(t))_{i\in \Z}$
  in \eqref{F0}
and $h_k(t)=(h_{k,i}(t))_{i\in \Z}$
in \eqref{sigma0}, we assume that  
for every $T>0$,
\be\label{gh}
\int_0^T 
  \|g(t)\|^2 dt <\infty
  \quad \text{and}
  \quad
    \sum_{k=1}^\infty  
 \| h_k  \|^2 _{L^\infty(0,T;  \ell^2 )}
 <\infty .
\ee

  Note that system \eqref{intr1}-\eqref{intr2}
is equivalent to the 
It\^{o}  stochastic equation
for  
  $u^\eps=(u^\eps_i)_{i\in\Z}$ in $\ell^2$:
 \be\label{intr3}
 d u^\eps (t)
 + \nu Au^\eps (t)  dt
 +f(u^\eps(t) ) dt 
+ \gamma  u^\eps(t) dt  =
g (t)  dt
 + \sqrt{\eps}
  \sum_{k=1}^\infty \left (
 h_k (t) + \sigma_k (u^\eps (t))
 \right ) dW_k(t),
\ee
 with  initial data
 \be\label{intr4}
 u^\eps(0)=u_0\in\ell^2.
 \ee

 Under conditions
 \eqref{F1}, \eqref{sigma1}-\eqref{delta1}
 and \eqref{gh}, one can show that
 system \eqref{intr3}-\eqref{intr4}
 is well-posed in
 $\ell^2$ for every $\eps\in (0,1)$
 (see, e.g.,   \cite[Theorem 2.5]{wan2019});
 more precisely,
 for every
  $u_0 \in L^2(\Omega,\calf_0; 
   \ell^2) $, there exists a unique 
  continuous  $\ell^2$-valued  $\calf_t$-adapted
 stochastic process $u^\eps$   such that
  $u^\eps\in L^2(\Omega, C([0, T], \ell^2))$  for  all $T>0$, 
  and for almost all $\omega \in \Omega$,
   $$
  u^\eps (t)
 =u_0 +  \int_0^t 
 \left (-\nu Au^\eps (s) 
 -f(u^\eps(s) ) 
 -  \gamma  u^\eps(s)  + g (s)   \right ) ds
 +  \sqrt{\eps} \sum_{k=1}^\infty 
 \int_0^t  \left (
 h_k (s) + \sigma_k (u^\eps (s))
 \right ) dW_k  
$$ 
in $\ell^2$  
for all $t \ge 0$.
Moreover,  for every $T>0$,
there exists a positive number
$C=C(T)$ independent of $u_0$ 
and $\eps \in (0,1)  $ such that
\be\label{uniest1}
\E \left ( 
\| u^\eps \|^2_{C([0,T],  \ell^2) }  \right ) \le 
C
\left (  \E (\| u_0 \|^2)
+  1
+ 
\int_0^T 
(\| g(s) \|^2
+\sum_{k=1}^\infty \| h_k(s) \|^2 ) ds
\right ).
\ee

 For convenience, we set
 $$
 H=\{ u=(u_j)_{j=1}^\infty:
 \sum_{j=1}^\infty |u_j|^2<\infty
 \}.
 $$
 For every $k\in \N$,
 let $e_k =(\delta_{k,j})_{j=1}^\infty$ 
 with $\delta_{k,j}=1$  for $j=k$ and
 $\delta_{k,j}=0$ otherwise.
 Then $\{e_k\}_{k=1}^\infty$ is an orthonormal
 basis of $H$.
 Let $I$ be the identity operator on $H$ and $W$
 be the cylindrical Wiener process
 in $H$ with covariance operator $I$ as given by
 $$
 W(t)
 =\sum_{k=1}^\infty W_k(t) e_k,
 \quad t\in \R^+,
 $$
  where
  the series
  converges
  in $L^2(\Omega, \calf; C([0,T], U))$
  for every $T>0$ with $U$ being a separable Hilbert
  space such that the embedding
  $H\hookrightarrow U$ is a Hilbert-Schmidt operator.
  
  Given $u\in \ell^2$  and $t\ge 0$, define
  $\sigma (t, u): H \to \ell^2$  by
  \be\label{defsigma2}
  \sigma (t,u) (v)  =  
 \sum_{k=1}^\infty 
  \left (
 h_k (t) + \sigma_k (u)
 \right )v_k,
 \quad \forall \ v=(v_k)_{k=1}^\infty \in H.
\ee
 Note that
 the series in \eqref{defsigma2}
 is convergent  in $\ell^2$ by
 \eqref{sigrow1}  and \eqref{gh}.
 Moreover, the operator
 $\sigma (t, u): H \to \ell^2$
 is Hilbert-Schmidt   and 
  \be\label{defsigma3}
 \| \sigma (t, u)\|_{L  (H, \ell^2)}
 \le
   \| \sigma (t, u)\|_{L_2 (H, \ell^2)}
 = \left (
 \sum_{k=1}^\infty
 \| h_k (t) + \sigma_k (u) \|^2
 \right )^{\frac 12}<\infty.
\ee
Hereafter, we use
${L  (H, \ell^2)}$
to denote the   space
of bounded linear operators from
$H$ to $\ell^2$ with norm
$\| \cdot\|_{L  (H, \ell^2)}$,
and use
${L _2 (H, \ell^2)}$
to denote the   space
of Hilbert-Schmidt  operators from
$H$ to $\ell^2$ with norm
$\| \cdot\|_{L_2  (H, \ell^2)}$.

 With above notation,
   system \eqref{intr3}-\eqref{intr4}
   can be reformulated as 
 \be\label{intr5}
 d u^\eps (t)
 + \nu Au^\eps (t)  dt
 +f(u^\eps(t) ) dt 
+ \gamma  u^\eps(t) dt  =
g (t)  dt
 + \sqrt{\eps} \sigma(t, u^\eps(t))  
   dW  (t),
\ee
 with  initial data
 \be\label{intr6}
 u^\eps(0)=u_0\in\ell^2.
 \ee
  
  In the next section, we  examine the
  large deviation principle of \eqref{intr5}-\eqref{intr6}.

 \section{Large deviation principles of lattice systems}
 
 In this section, we prove the large
 deviation principle of the 
 family of  solutions  $\{u^\eps\}  $ 
 of \eqref{intr5}-\eqref{intr6}
 as $\eps \to 0$ which is stated below.

 \begin{thm}\label{main_resu}
 Suppose that
 \eqref{F1}, \eqref{sigma1}-\eqref{delta1}
 and \eqref{gh} hold, and $u^\eps$
 is the solution of 
   \eqref{intr5}-\eqref{intr6}.
   Then the family
   $\{u^\eps\}$,  as $\eps \to 0$, 
satisfies the large deviation principle
in $C([0,T], \ell^2)$ with the good rate function
as given by \eqref{rate_LS}.
\end{thm}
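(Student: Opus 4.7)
The plan is to apply Proposition \ref{LP1} with $\cale = C([0,T], \ell^2)$, so that Theorem \ref{main_resu} reduces to verifying the two Budhiraja--Dupuis hypotheses (\textbf{H1}) and (\textbf{H2}) for the solution maps $\calg^\eps$ of \eqref{intr5}--\eqref{intr6}. To this end I would first introduce the skeleton map $\calg^0$: given $v \in L^2(0,T;H)$, define $\calg^0\bigl(\int_0^\cdot v(s)\,ds\bigr) = u^v$, where $u^v \in C([0,T], \ell^2)$ is the unique solution of the deterministic controlled equation
\be\label{skel}
 du^v(t) + \nu A u^v(t)\,dt + f(u^v(t))\,dt + \gamma u^v(t)\,dt = g(t)\,dt + \sigma(t, u^v(t))\, v(t)\,dt,
 \quad u^v(0)=u_0.
\ee
Well-posedness of \eqref{skel} in $C([0,T], \ell^2)$ follows by a standard truncation/fixed-point argument built on the local Lipschitz bounds \eqref{f1a1}, \eqref{sigrow2}, with global existence obtained from an a priori energy estimate that exploits the monotonicity \eqref{f2}, the dissipativity from $\nu A$ (through $\|Bu\|^2$), and the assumption $\int_0^T\|v(t)\|_H^2\,dt<\infty$ combined with the linear growth of $\sigma$. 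The rate function \eqref{rate_LS} referenced in the statement is then nothing but \eqref{pre1} with this $\calg^0$.

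For (\textbf{H2}) I would show that if $v_n \to v$ weakly in $L^2(0,T;H)$ with $v_n \in S_N$, then $u^{v_n} \to u^v$ in $C([0,T], \ell^2)$. The key is a \emph{tail estimate uniform in $n$}: using a smooth cutoff $\chi_k : \Z \to [0,1]$ supported outside $\{|i|\le k\}$ and testing \eqref{skel} against $\chi_k u^{v_n}$, one gets $\sup_n \|\chi_k u^{v_n}\|_{C([0,T],\ell^2)} \to 0$ as $k\to\infty$, thanks to \eqref{delta1}, \eqref{gh} and the uniform bound $\int_0^T\|v_n\|_H^2 \le N$. Convergence on each finite block $\{|i|\le k\}$ is then a finite-dimensional ODE weak-strong continuity problem: the drift is locally Lipschitz, and the control enters linearly through $\sigma(t,u^{v_n})v_n$, so by extracting a subsequence, uniform bounds give compactness in $C([0,T];\R^{(2k+1)^N})$, and weak convergence of $v_n$ identifies the limit as the corresponding block of $u^v$. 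Combining the tail estimate with block convergence yields (\textbf{H2}).

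Verifying (\textbf{H1}) is the main obstacle and will occupy the bulk of the work. By Girsanov's theorem, $\calg^\eps\bigl(W + \eps^{-1/2}\int_0^\cdot v^\eps(s)\,ds\bigr)$ is realized as the unique solution $u^{\eps,v^\eps}$ of the stochastic controlled system
\be\label{ctrlstoc}
 du^{\eps,v^\eps} + \nu A u^{\eps,v^\eps} dt + f(u^{\eps,v^\eps}) dt + \gamma u^{\eps,v^\eps} dt = g\,dt + \sigma(t,u^{\eps,v^\eps}) v^\eps\,dt + \sqrt{\eps}\,\sigma(t,u^{\eps,v^\eps})\,dW.
\ee
I would first establish moment bounds $\E \|u^{\eps,v^\eps}\|^2_{C([0,T],\ell^2)} \le C(N)$ uniform in $\eps\in(0,1)$ and $v^\eps\in \cala_N$, via It\^o's formula, \eqref{f2}, \eqref{sigrow1}, Cauchy--Schwarz on the control term $(\sigma(\cdot,u)v^\eps, u)$ combined with $\int_0^T\|v^\eps\|_H^2\le N$, and Burkholder--Davis--Gundy for the stochastic integral. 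Next I would prove a stochastic tail estimate: $\E \sup_{t\le T}\|\chi_k u^{\eps,v^\eps}(t)\|^2 \to 0$ as $k\to\infty$ uniformly in $\eps$ and $v^\eps \in \cala_N$, again using a discrete cutoff.

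Finally, to extract the limit I would combine these estimates with an Aubin--Lions-type argument on finite blocks (exploiting the compactness of $S_N$ under the weak topology) to deduce tightness of $\{(u^{\eps,v^\eps}, v^\eps)\}$ in $C([0,T],\ell^2)\times S_N$. By Skorokhod's representation theorem, I could pass to an a.s.\ convergent version; the stochastic integral $\sqrt{\eps}\int_0^\cdot \sigma\,dW$ vanishes in $L^2$ because of \eqref{sigrow1} and the uniform moment bound, while the term $\int_0^\cdot \sigma(s, u^{\eps,v^\eps})v^\eps\,ds$ converges to $\int_0^\cdot \sigma(s,u)v\,ds$ by the same weak-strong argument as in (\textbf{H2}). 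Uniqueness for the skeleton \eqref{skel} then identifies the limit as $\calg^0(\int_0^\cdot v\,ds)$, giving (\textbf{H1}). The most delicate point throughout is handling simultaneously the polynomial drift $f$ (which forces reliance on the monotonicity \eqref{f2} rather than Lipschitz bounds), the only locally Lipschitz diffusion $\sigma$, and the infinite dimensionality of $\ell^2$ via uniform tail control.
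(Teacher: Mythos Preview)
Your outline is correct and would yield the theorem, but it takes a different route from the paper in both (\textbf{H2}) and (\textbf{H1}).

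For (\textbf{H2}), the paper does \emph{not} use lattice tail cutoffs $\chi_k$. Instead it isolates a purely linear compactness lemma (Lemma~\ref{wc_dif}): for fixed $\xi\in L^\infty(0,T;\ell^2)$, the map $v\mapsto \int_0^\cdot \sigma(s,\xi(s))v(s)\,ds$ is continuous from $L^2(0,T;H)$-weak to $C([0,T],\ell^2)$-strong, proved via the Hilbert--Schmidt property of $\sigma(t,\xi(t))$ and finite-rank projections $P_m,Q_m$ on $\ell^2$. Then $u_{v_n}\to u_v$ is obtained by a direct energy estimate on $\|u_{v_n}-u_v\|^2$, in which the dangerous term $\bigl(\sigma(t,u_v)(v_n-v),\,u_{v_n}-u_v\bigr)$ is handled by writing it as $\frac{d}{dt}(\psi_n,u_{v_n}-u_v)-(\psi_n,\frac{d}{dt}(u_{v_n}-u_v))$ with $\psi_n(t)=\int_0^t\sigma(s,u_v)(v_n-v)\,ds\to 0$ in $C([0,T],\ell^2)$ by the linear lemma. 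Your cutoff-plus-block-compactness argument also works, and is the classical lattice technique, but it requires a separate identification of the limit in the equation (which in the end needs a statement like Lemma~\ref{wc_dif} anyway for the term $\int_0^t\sigma(s,u^*)(v_n-v)\,ds$).

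For (\textbf{H1}), the paper avoids proving tightness of $u^{\eps,v^\eps}$ altogether. It splits $u^\eps_{v^\eps}-u_v=(u^\eps_{v^\eps}-u_{v^\eps})+(u_{v^\eps}-u_v)$, where $u_{v^\eps}$ is the \emph{deterministic} skeleton driven by the random control $v^\eps$. The first piece is shown to vanish in probability by a stopping-time argument ($\tau^\eps=\inf\{t:\|u^\eps_{v^\eps}(t)\|\ge M\}\wedge T$) combined with It\^o's formula, the monotonicity \eqref{f2}, and the local Lipschitz bound \eqref{sigrow2} on $\{\|u\|\le M\}$; the uniform moment bound \eqref{gep 2} then gives $P(\tau^\eps<T)\le C/M^2$. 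The second piece converges in distribution by Skorokhod applied only to $v^\eps$ (not to the solutions) together with the deterministic weak-to-strong continuity already proved for (\textbf{H2}). Your tightness/Skorokhod route on $(u^{\eps,v^\eps},v^\eps)$ is heavier---you must establish stochastic tail estimates and equicontinuity in probability---whereas the paper's decomposition reduces everything to one deterministic continuity lemma and one localized It\^o estimate. On the other hand, your approach is more modular and would transfer unchanged to settings where the skeleton comparison is less explicit.
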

  
  The rest of the paper is devoted to the proof
  of Theorem \ref{main_resu}. 
 Note that  for every  $\eps\in (0,1)$ and $T>0$,
  by the existence and uniqueness of solution
  to \eqref{intr5}-\eqref{intr6}
  in $C([0,T], \ell^2)$,
  there exists a Borel-measurable map
  $\calg^\eps: C([0,T], U) \to C([0,T], \ell^2)$
  such that
  $$
  u^\eps =\calg^\eps (W),
  \quad  \text{P-almost surely}.
  $$
  To study the large deviation principle of
  $\{u^\eps\}$ as $\eps \to 0$, we
  introduce a deterministic  control
  system corresponding to \eqref{intr5}.
  Given a control  $v\in L^2(0,T; H)$, 
  solve  for $u_v$ in terms of   the controlled  equation:
   \be\label{contr1}
{\frac {d u_v (t)}{dt}}
=-  \nu Au_v  (t)  
-f(u_v (t) )  
- \gamma  u_v (t)   
+
g (t)  
 +   \sigma(t, u_v (t))  
    v (t),
\ee
 with  initial data
 \be\label{contr2}
 u_v  (0)=u_0 \in\ell^2.
 \ee
  
As usual, by a solution
$u_v$  to \eqref{contr1}-\eqref{contr2}
on $[0,T]$, we mean  
$u_v\in C([0,T], \ell^2)$  such that
for all $t\in [0,T]$,
 \be\label{contr3}
u_v(t) = u_0
+ \int_0^t  
\left (
-\nu Au_v  (s)   
-f(u_v (s) )  
- \gamma  u_v (s)    
+ g (s)   
 +   \sigma(s, u_v (s))  
    v (s)
  \right ) ds.
\ee

 We first prove the existence and
 uniqueness of solutions  to
 \eqref{contr1}-\eqref{contr2}
 in the sense of \eqref{contr3}.

 \begin{lem}\label{exis_sol}
 Suppose that
 \eqref{F1}, \eqref{sigma1}-\eqref{delta1}
 and \eqref{gh} hold.
 Then for every 
  $v\in L^2(0,T; H)$,
   problem
\eqref{contr1}-\eqref{contr2}
 has a unique solution
 $u_v \in C([0,T], \ell^2)$.

 Furthermore,
     for each $R_1>0$  and $R_2>0$,
    there  exists
       $C_1 =C_1  (R_1,R_2, T)>0$ such that  
       for any $u_{0,1}, u_{0,2}
       \in \ell^2$ with 
    $\| u_{0,1} \|\le R_1,
    \| u_{0,2}  \|\le R_1$,  and any
    $v_1, v_2\in L^2(0, T; H)$
    with
    $\| v_1\|_{L^2(0, T; H)}\le R_2$
    and $\| v_2\|_{L^2(0, T; H)}\le R_2$,
    the solutions $u_{v_1}$ and $u_{v_2}$
    of  \eqref{contr1}-\eqref{contr2}
    with initial data $u_{0,1}$
    and $u_{0,2}$,   respectively,   
    satisfy
    \be\label{exis_sol 1}
    \| u_{v_1}-u_{v_2}\|_{C([0,T], \ell^2)}^2
    \le
    C_1
    \left (
    \| u_{0,1}- u_{0,2}\|^2
    +\|  v_1- v_2 \|^2_{L^2(0, T; H)}
    \right ),
   \ee
   and
    \be\label{exis_sol 2}
    \| u_{v_1} \|_{C([0,T], \ell^2)}^2
    \le
    C_1 .
   \ee
 \end{lem}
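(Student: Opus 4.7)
The plan is to view \eqref{contr1}--\eqref{contr2} as an evolution equation for an $\ell^2$-valued curve whose right-hand side is locally Lipschitz in $u$: the operator $A$ is bounded on $\ell^2$, $f$ is locally Lipschitz by \eqref{f1a1}, and $\sigma(t,\cdot)$ is locally Lipschitz by \eqref{sigrow2}. I would first prove local-in-time existence and uniqueness by a contraction argument on \eqref{contr3}, then derive an a priori estimate yielding \eqref{exis_sol 2}, which both globalizes the local solution and gives the uniform bound, and finally establish the difference estimate \eqref{exis_sol 1}, from which uniqueness on the full interval follows as the special case $u_{0,1}=u_{0,2}$, $v_1=v_2$.

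For local existence, fix $v\in L^2(0,T;H)$ and $u_0\in \ell^2$ and apply the Banach fixed point theorem to the map sending $u$ to the right-hand side of \eqref{contr3} on a closed ball in $C([0,\tau],\ell^2)$ for $\tau>0$ small. The contributions of $Au$, $f(u)$ and $\gamma u$ to the Lipschitz constant are of order $\tau$, while the $\sigma(\cdot,u)v$ term contributes at most $L_R\int_0^\tau\|v(s)\|_H\,ds\le L_R\sqrt{\tau}\,\|v\|_{L^2(0,T;H)}$, which vanishes as $\tau\to 0$. To propagate the local solution to $[0,T]$ and prove \eqref{exis_sol 2}, I would test \eqref{contr1} against $u_v$ in $\ell^2$. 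The term $\nu(Au_v,u_v)=\nu\|Bu_v\|^2$ is nonnegative (using $A=\sum_{j}B_j^*B_j$) and $(f(u_v),u_v)\ge 0$ follows from \eqref{f2} together with $f(0)=0$, so both can be dropped. Absorbing $-\gamma\|u_v\|^2\le|\gamma|\|u_v\|^2$, bounding $(g,u_v)$ by Cauchy--Schwarz, and estimating $(\sigma(t,u_v)v,u_v)$ via Young's inequality combined with \eqref{sigrow1} and \eqref{gh} leads to a differential inequality of the form
\[
\tfrac{d}{dt}\|u_v\|^2 \le \bigl(C+C\|v\|_H^2\bigr)\bigl(1+\|u_v\|^2\bigr) + \|g\|^2 + C\sum_{k}\|h_k(t)\|^2.
\]
Gronwall's lemma together with $\int_0^T\|v\|_H^2\,dt\le R_2^2$, $\|u_0\|\le R_1$, and \eqref{gh} then yields \eqref{exis_sol 2} with a constant depending only on $R_1,R_2,T$.

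For \eqref{exis_sol 1}, set $w=u_{v_1}-u_{v_2}$ and test the difference equation against $w$. The $A$-contribution is nonpositive and the $f$-contribution is nonpositive by \eqref{f2}. The delicate remaining term is $(\sigma(t,u_{v_1})v_1-\sigma(t,u_{v_2})v_2,w)$, which I would split as
\[
\bigl((\sigma(t,u_{v_1})-\sigma(t,u_{v_2}))v_2,w\bigr) + \bigl(\sigma(t,u_{v_1})(v_1-v_2),w\bigr).
\]
Using \eqref{sigrow2} with the uniform bound \eqref{exis_sol 2} gives $\|\sigma(t,u_{v_1})-\sigma(t,u_{v_2})\|_{L_2(H,\ell^2)}\le C_{R_1,R_2}\|w\|$, and using \eqref{sigrow1} with \eqref{exis_sol 2} gives $\|\sigma(t,u_{v_1})\|_{L_2(H,\ell^2)}\le C_{R_1,R_2}(1+\|h_k(t)\|\text{-terms})$. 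Cauchy--Schwarz and Young then produce
\[
\tfrac{d}{dt}\|w\|^2 \le \bigl(C+C\|v_2\|_H^2\bigr)\|w\|^2 + C\|v_1-v_2\|_H^2,
\]
whose time-coefficient has $L^1(0,T)$-norm bounded by $C(T+R_2^2)$; Gronwall yields \eqref{exis_sol 1}.

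The main obstacle is that the control $v$ lies only in $L^2(0,T;H)$, so the time-coefficient $\|v(t)\|_H$ appearing in the $\sigma\cdot v$ terms is neither pointwise bounded nor $L^\infty$-integrable. The resolution, used throughout, is that Young's inequality consistently converts every such product into an expression with the $L^1$-in-time weight $\|v(t)\|_H^2$ in front of $\|u_v\|^2$ or $\|w\|^2$, which is exactly the form needed for Gronwall's lemma; the bound $\|v\|_{L^2(0,T;H)}^2\le R_2^2$ then ensures the resulting constants depend only on $R_1$, $R_2$, and $T$.
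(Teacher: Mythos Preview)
Your plan is correct and follows essentially the same route as the paper: local existence via the local Lipschitz property of the right-hand side, an energy estimate obtained by testing against $u_v$ (dropping the nonnegative $A$- and $f$-contributions and handling $\sigma(t,u_v)v$ via Young so that only the $L^1$-weight $\|v(t)\|_H^2$ appears), and a difference estimate obtained the same way with the $\sigma$-term split into a Lipschitz part and an operator-norm part. The only cosmetic differences are that the paper splits $\sigma(t,u_1)v_1-\sigma(t,u_2)v_2$ using $v_1$ on the Lipschitz piece (you use $v_2$), and keeps the first power $\|v_1(t)\|_H$ in the Gronwall coefficient (bounding its time integral by $\sqrt{T}R_2$) rather than squaring via Young as you do; both variants work equally well.
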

 
 \begin{proof}
 Let   $v\in L^2(0,T; H)$ be given.
 We first prove the existence and uniqueness of
 solution to system \eqref{contr1}-\eqref{contr2}.
  By dropping
   the subscript $v$, system \eqref{contr1}-\eqref{contr2}
   can be written as
   \be\label{exis_solp1}
   {\frac {du(t)}{dt}}
   = G(t, u(t)),\quad  u(0) =  u_0,
   \ee
   where 
   \be\label{exis_solp2}
 G(t,u)
 =  -  \nu Au   
-f(u    )  
- \gamma  u   
+
g (t)  
 +   \sigma(t, u )  
    v (t).
   \ee
    Since  $v\in L^2 (0,T; H)$,
    by 
   \eqref{f1}, \eqref{f1a1} and \eqref{sigrow1} we find
   from \eqref{exis_solp2} that 
   for every $R>0$, there exists a constant
   $c_1=c_1(R)>0$ such that
   for all $ t \in [0,T]$  and $u\in \ell^2$ with
   $\| u \| \le R$,
    \be\label{exis_solp3}
   \| G(t, u) \|^2 \le c_1 
   \left (1+ \| g(t)\|^2
   + \|u\|^2
   \right )
   + c_1\left (1
   +\sum_{k=1}^\infty \| h_k(t)\|^2
   + \| u \|^2
   \right ) \| v(t) \|^2_H.
   \ee
Similarly,  by  \eqref{f1a1}
and \eqref{sigrow2} we find that
there exists 
    $c_2=c_2(R)>0$ such that
   for all $ t \in [0,T]$  and $u_1, u_2 \in \ell^2$ with
   $\| u_1 \| \le R$
   and
     $\| u_2 \| \le R$,
       \be\label{exis_solp4}
   \| G(t, u_1) -
   G(t, u_2) 
   \|^2 \le c_2 
  \left   (1+ \| v(t)\|^2_H
  \right )
     \| u_1-u_2 \|^2.
   \ee
   
   It follows from \eqref{exis_solp3}-\eqref{exis_solp4}
   that for each $u_0\in \ell^2$,
   equation \eqref{exis_solp1}
   has a unique local maximal solution
   $u\in C([0,T_0), \ell^2 )$
    for some $0< T_0 \le T$.
   Next, we show this solution is actually
   defined on  the entire interval $[0,T]$
   by uniform estimates of solutions.
   
   By \eqref{exis_solp1} we have
   for all $t\in (0, T_0)$,
   $$
   {\frac 12} {\frac d{dt}}
   \| u(t) \|^2
   =-\nu \| Bu(t)\|^2 -(f(u(t)), u(t))
   -\gamma \| u(t) \|^2
   $$
   $$
   + (g(t), u(t))
   +(\sigma (t, u(t)) v(t), u(t)),
   $$
   which along with \eqref{f1} and \eqref{f2}
   implies that
  \be\label{exis_solp5}
    {\frac d{dt}}
   \| u(t) \|^2
   \le 
   - 2\gamma \| u(t) \|^2
   +  2 (g(t), u(t))
   + 2 (\sigma (t, u(t)) v(t), u(t)).
\ee
   By Young\rq{}s inequality we have
 \be\label{exis_solp6} 
  2 |(g(t), u(t))|
  \le \| u(t) \|^2 + \| g(t) \|^2.
  \ee
  For the last term
  on the right-hand side of \eqref{exis_solp5},
  by \eqref{defsigma3}
  and \eqref{sigrow1} we get
 $$
  2| (\sigma (t, u(t)) v(t), u(t))|
  \le \| \sigma (t, u(t)) v(t)\|^2
  + \| u(t) \|^2
  $$
  $$ 
  \le \| \sigma (t, u(t))\|^2_{L(H,\ell^2)} \| v(t)\|^2_H
  + \| u(t) \|^2
  $$
   $$ 
  \le
  \sum_{k=1}^\infty
  \| h_k(t) + \sigma_k (u(t)) \|^2  \| v(t)\|^2_H
  + \| u(t) \|^2
  $$   
   $$ 
  \le
  2 \| v(t)\|^2_H\sum_{k=1}^\infty
  \| h_k(t)\|^2
  +   2 \| v(t)\|^2_H\sum_{k=1}^\infty
     \|  \sigma_k (u(t)) \|^2   
  + \| u(t) \|^2
  $$
     \be\label{exis_solp7} 
  \le
  2 \| v(t)\|^2_H\sum_{k=1}^\infty
  \| h_k(t)\|^2
   +
  4  \alpha^2 \| \delta\|^2
   (1+ \| u(t) \|^2) 
   \| v(t)\|^2_H
  + \| u(t) \|^2.
  \ee
   By \eqref{exis_solp5}-\eqref{exis_solp7} we get
   for all $t\in (0,T_0)$,
    $$
    {\frac d{dt}}
   \| u(t) \|^2 
   \le 
   \left (2- 2\gamma 
   + 4\alpha^2
   \| \delta\|^2    \| v(t)\|^2_H
   \right )
   \| u(t) \|^2
   $$
    \be\label{exis_solp8}
   +   4  \alpha^2 \| \delta\|^2
   \| v(t)\|^2_H
   + \| g(t) \|^2
   +
    2 \| v(t)\|^2_H\sum_{k=1}^\infty
  \| h_k(t)\|^2.
  \ee
   By \eqref{exis_solp8} we find that
  for all $t\in [0, T_0)$,
  $$
   \| u(t) \|^2 
   \le 
    e^{  \int_0^t  (2- 2\gamma 
   + 4\alpha^2
   \| \delta\|^2    \| v (r)\|^2 _H
     )  dr }\|u_0\|^2
     $$
     $$
     +
      4  \alpha^2 \| \delta\|^2
    \int_0^t
     e^{  
      \int_s^t  (2- 2\gamma 
   + 4\alpha^2
   \| \delta\|^2    \| v (r)\|^2_H 
     )  dr 
      }  \| v(s) \|^2_H
      ds
   $$
    $$
     + 
 \int_0^t
     e^{  
      \int_s^t  (2- 2\gamma 
   + 4\alpha^2
   \| \delta\|^2    \| v (r)\|^2_H 
     )  dr 
      }   \| g(s) \|^2  ds
   $$
 $$
 + 
    2 \sum_{k=1}^\infty \| h_k\|^2_ {L^\infty
     (0,T; \ell^2 )}
     \int_0^t
     e^{  
      \int_s^t  (2- 2\gamma 
   + 4\alpha^2
   \| \delta\|^2    \| v (r)\|^2_H 
     )  dr 
      }\| v(s) \|^2_ H ds 
       $$
        $$ 
   \le 
    e^{   (2- 2\gamma)T
    + 
     4\alpha^2
   \| \delta\|^2   \int_0^T  \| v (r)\|^2 _H
       dr }\|u_0\|^2
     $$
           $$ 
  +
    4  \alpha^2 \| \delta\|^2
      e^{   (2- 2\gamma)T
    + 
     4\alpha^2
   \| \delta\|^2   \int_0^T  \| v (r)\|^2 _H
       dr }
       \int_0^T 
        \| v(s) \|^2_H
      ds
      $$
         $$ 
  + 
      e^{   (2- 2\gamma)T
    + 
     4\alpha^2
   \| \delta\|^2   \int_0^T  \| v (r)\|^2 _H
       dr }
       \int_0^T 
        \| g(s) \|^2 
      ds
      $$
        \be\label{exis_solp9}
 + 
    2
     e^{   (2- 2\gamma)T
    + 
     4\alpha^2
   \| \delta\|^2   \int_0^T  \| v (r)\|^2 _H
       dr }
     \sum_{k=1}^\infty \| h_k\|^2_{L^\infty
     (0,T; \ell^2 )}
     \int_0^T
     \| v(s) \|^2_ H ds .
    \ee
    By  \eqref{gh} and \eqref{exis_solp9} we infer that
    for each $R_1>0$  and $R_2>0$,
    there  exists
       $c_3=c_3(R_1,R_2, T)>0$ such that 
       for any $u_0\in \ell^2$ with
       $\| u_0 \|\le R_1$ and any
        $v\in L^2(0, T; H)$ with
        $\| v\|_{L^2(0, T; H)} \le R_2$,
        the solution $u$ satisfies 
       \be\label{exis_solp10}
   \| u(t) \|^2 
   \le   c_3,\quad \forall \ t\in[0,T_0),
\ee
   which implies that    
   $T_0 =T$ and hence
  the solution  $u$ of  \eqref{exis_solp1}
  is defined on the entire interval $[0,T]$.

   Next, we prove  \eqref{exis_sol 1}.
    Let $v_1, v_2$ be given in 
    $ L^2(0, T; H)$,
    and denote by
    $$
    u_1=u_{v_1}
    \quad \text{ and } \quad
    u_2=u_{v_2}.
    $$
    Suppose 
    $\| u_{0,1} \|\le R_1$,  $\| u_{0,2} \|\le R_1$, 
    $\| v_1\|_{L^2(0, T; H)}\le R_2$
    and $\| v_2\|_{L^2(0, T; H)}\le R_2$.
    Then by \eqref{exis_solp10} we have
    \be
    \label{exis_solp11}
      \| u_1 (t) \| +   \| u_2 (t) \|
   \le   c_4,\quad \forall \ t\in[0,T],
    \ee
    where $c_4=c_4(R_1, R_2, T)>0$.
    Note that
    \eqref{exis_solp11} implies
      \eqref{exis_sol 2}.

   By \eqref{exis_solp1}-\eqref{exis_solp2}
     we have
     $$
     {\frac d{dt}}
     \| u_1(t) -u_2(t)\|^2
     =-2 \nu \| B(u_1(t)-u_2(t))\|^2
     $$
     $$
     -2 (f(u_1(t)) -f(u_2(t)), u_1(t)-u_2(t))
     -2\gamma \| u_1(t)-u_2(t )\|^2
     $$
     $$
     +2
     \left (
     \sigma (t, u_1(t))v_1(t)-
     \sigma (t, u_2(t))v_2(t),
     \  u_1(t)-u_2(t)
     \right ),
     $$
  which together with \eqref{f2}
  gives  
      $$
     {\frac d{dt}}
     \| u_1(t) -u_2(t)\|^2
     \le
     -2\gamma \| u_1(t)-u_2(t )\|^2
     $$
         \be\label{exis_solp12}
     +2
     \left (
     \sigma (t, u_1(t))v_1(t)-
     \sigma (t, u_2(t))v_2(t),
     \  u_1(t)-u_2(t)
     \right ).
   \ee
   For the last term in \eqref{exis_solp11},
   by \eqref{defsigma2}
    we get
   $$
   2
     \left (
     \sigma (t, u_1(t))v_1(t)-
     \sigma (t, u_2(t))v_2(t),
     \  u_1(t)-u_2(t)
     \right )
     $$ 
        $$
   \le 2
     \| 
     \sigma (t, u_1(t))v_1(t)-
     \sigma (t, u_2(t))v_2(t)\|
     \| u_1(t)-u_2(t) \|
     $$ 
  $$
   \le  2
     \| \sum_{k=1}^\infty
     (\sigma_k (u_1(t))v_{1,k} (t)
     -
     \sigma_k (u_2(t))v_{2,k} (t) )\| 
     \| u_1(t)-u_2(t) \|
     $$ 
       $$
  +  2
     \| \sum_{k=1}^\infty
     h_k(t) (v_{1,k} (t) - v_{2,k} (t) ) \| 
     \| u_1(t)-u_2(t) \|
     $$ 
      $$
 \le  2
     \| \sum_{k=1}^\infty
     (\sigma_k (u_1(t)) 
     -\sigma_k (u_2(t))  )
     v_{1,k} (t) \| 
     \| u_1(t)-u_2(t) \|
     $$ 
       $$
+  2
     \| \sum_{k=1}^\infty
      \sigma_k (u_2(t))  
     (v_{1,k} (t)  -
     v_{2,k} (t) )
      \| 
     \| u_1(t)-u_2(t) \|
     $$ 
      $$
  +  2
     \| \sum_{k=1}^\infty
     h_k(t) (v_{1,k} (t) - v_{2,k} (t) ) \| 
     \| u_1(t)-u_2(t) \|
     $$ 
       $$
 \le  2
     \left (
      \sum_{k=1}^\infty
     \| \sigma_k (u_1(t)) 
     -\sigma_k (u_2(t))  \|^2
     \right )^{\frac 12}
    \| v_{1 } (t) \|_H 
     \| u_1(t)-u_2(t) \|
     $$ 
      $$
+  2
    \left (
     \sum_{k=1}^\infty
      \| \sigma_k (u_2(t)) \|^2
      \right )^{\frac 12} 
    \| v_{1} (t)  -
     v_{2} (t)  \|_H
     \| u_1(t)-u_2(t) \|
  $$
          \be\label{exis_solp13}
+  2
    \left (
     \sum_{k=1}^\infty
      \| h_k(t)  \|^2
      \right )^{\frac 12} 
    \| v_{1} (t)  -
     v_{2} (t)  \|_H
     \| u_1(t)-u_2(t) \|.
     \ee
     By    \eqref{sigrow2} and
     \eqref{exis_solp11} we see that
     there exists $c_5=c_5 (R_1, R_2, T)>0$ such that
     for all $t\in [0, T]$,
    \be\label{exis_solp14}
     \left (
      \sum_{k=1}^\infty
     \| \sigma_k (u_1(t)) 
     -\sigma_k (u_2(t))  \|^2
     \right )^{\frac 12}
     \le c_5 \| \delta\| \|u_1(t) - u_2 (t)  \|.
\ee
On  the other hand, 
     by  \eqref{sigrow1}
     and  \eqref{exis_solp11}
      we know  that
     there exists $c_6=c_6 (R_1, R_2, T)>0$ such that
     for all $t\in [0, T]$,
    \be\label{exis_solp15}
      \left (
     \sum_{k=1}^\infty
      \| \sigma_k (u_2(t)) \|^2
      \right )^{\frac 12} 
      \le \alpha \| \delta \| c_6.
      \ee
      By   \eqref{exis_solp13}-\eqref{exis_solp15}
      we get  for all $t\in [0, T]$,
       $$
   2
     \left (
     \sigma (t, u_1(t))v_1(t)-
     \sigma (t, u_2(t))v_2(t),
     \  u_1(t)-u_2(t)
     \right )
     $$ 
     $$
     \le
     2  c_5 \| \delta\| \| v_1(t) \|_H  \|u_1(t) - u_2 (t)  \|^2
     +
     2 c_6\alpha \| \delta \| \|v_1(t) - v_2 (t)  \|_H
      \|u_1(t) - u_2 (t)  \|
      $$
     $$
     \le
     \left (2+
     2  c_5 \| \delta\| \| v_1(t) \|_H 
      \right )
      \|u_1(t) - u_2 (t)  \|^2 
      $$
         \be\label{exis_solp16}
      +
   \left  (c_6^2  \alpha ^2\| \delta \|^2
     +  
     \sum_{k=1}^\infty
      \| h_k(t)  \|^2  
      \right ) 
      \|v_1(t) - v_2 (t)  \|_H^2.
        \ee
      By \eqref{exis_solp12}
      and \eqref{exis_solp16} we obtain 
      for all $t\in (0, T]$,
        $$
     {\frac d{dt}}
     \| u_1(t) -u_2(t)\|^2
     \le
      \left (2-2\gamma +
     2  c_5 \| \delta\| \| v_1(t) \|_H 
      \right )
      \|u_1(t) - u_2 (t)  \|^2 
      $$
        \be\label{exis_solp17}
      +
    \left  (c_6^2  \alpha ^2\| \delta \|^2
     +  
     \sum_{k=1}^\infty
      \| h_k(t)  \|^2  
      \right ) 
      \|v_1(t) - v_2 (t)  \|_H^2,
      \ee
     from which we get
     for all $t\in [0, T]$,
     $$
      \| u_1(t) -u_2(t)\|^2
      \le e^{
      \int_0^t 
         (2-2\gamma +
     2  c_5 \| \delta\| \| v_1(r) \|_H 
      )dr
      } \| u_{0,1}  -u_{0,2}   \|^2
      $$
      $$
            +
 \left  (c_6^2  \alpha ^2\| \delta \|^2
     +  
     \sum_{k=1}^\infty
      \| h_k  \|^2_{L^\infty(0,T; \ell^2)}  
      \right ) 
    \int_0^t
    e^{
      \int_s^t 
         (2-2\gamma +
     2  c_5 \| \delta\| \| v_1(r) \|_H 
      )dr
      }
      \|v_1(s) - v_2 (s)  \|_H^2
      ds
      $$
        $$
        \le e^{ 
         (2-2\gamma)T  +
     2  c_5 \sqrt{T} R_2  \| \delta\|  
      } \| u_{0,1}  -u_{0,2} \|^2
      $$
    \be\label{exis_solp18}
       +  \left  (c_6^2  \alpha ^2\| \delta \|^2
     +  
     \sum_{k=1}^\infty
      \| h_k  \|^2_{L^\infty(0,T; \ell^2)}  
      \right ) 
         e^{ 
         (2-2\gamma)T  +
     2  c_5 \sqrt{T}  R_2 \| \delta\|  
      }     \|v_1  - v_2    \|^2_{L^2(0,T; H)}.
    \ee
    Then   \eqref{exis_sol 1} follows
    from \eqref{exis_solp18} immediately.
 \end{proof}

 As a
  consequence of Lemma \ref{exis_sol},
 we find that the solution
 $u_v$ of \eqref{contr1}-\eqref{contr2}
 is continuous in $C([0,T], \ell^2)$
 with respect to
 initial data  $u_0$ in $\ell^2$
 and control  $v$ in $L^2(0,T; H)$.
 In the sequel,  we will further
 prove the continuity of $u_v$
 in  $C([0,T], \ell^2)$
  with respect to
    $v\in L^2(0,T; H)$ in   the weak
    topology of  $L^2(0,T; H)$,
    which is related to condition ({\bf H2})
    for the Laplace principle of the solutions
    of \eqref{intr5}-\eqref{intr6}.
    As a necessary step, we first prove the following
    convergence.

 \begin{lem}\label{wc_dif}
 Suppose that
 \eqref{sigma1}-\eqref{delta1}
 and \eqref{gh} hold.
For a fixed $\xi\in L^\infty(0,T; \ell^2)$,
define an operator
$\calt: L^2(0,T; H)
\to C([0,T],   \ell^2)$ by
\be\label{wc_dif 1}
\calt (v) (t)
=\int_0^t
\sigma (s, \xi (s)) v(s) ds,
\quad \forall \ v \in  L^2(0,T;H).
\ee
Then we have:
\begin{enumerate}
\item[(i)] $\calt$ is continuous from the
weak topology of  $L^2(0,T; H)$
to the strong topology
of  $ C([0,T],   \ell^2)$. 
\item[(ii)] $\calt: L^2(0,T; H)
\to C([0,T],   \ell^2)$ is compact with respect
to the strong topology of $ C([0,T],   \ell^2)$. 
\end{enumerate}
 \end{lem}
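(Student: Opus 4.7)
The plan is to prove (i) first and deduce (ii) as an immediate corollary. For (i), suppose $v_n\rightharpoonup v$ weakly in $L^2(0,T;H)$. I will establish (a) pointwise convergence $\calt(v_n)(t)\to\calt(v)(t)$ strongly in $\ell^2$ for each $t\in[0,T]$, together with (b) uniform equicontinuity of the family $\{\calt(v_n)\}_{n\ge 1}$ on $[0,T]$, and then upgrade this to uniform convergence by a standard subsequence argument.

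As a preliminary bound, the hypothesis $\xi\in L^\infty(0,T;\ell^2)$ combined with \eqref{sigrow1} and \eqref{gh} yields
\be\label{plan1}
M:=\mathrm{ess\,sup}_{s\in[0,T]}\,\|\sigma(s,\xi(s))\|_{L_2(H,\ell^2)}^2
\le 2\sum_{k=1}^\infty \|h_k\|_{L^\infty(0,T;\ell^2)}^2
+4\alpha^2\|\delta\|^2\bigl(1+\|\xi\|^2_{L^\infty(0,T;\ell^2)}\bigr)<\infty.
\ee
The key observation for (a) is that, for each fixed $t\in[0,T]$, the bounded linear operator $L_t:L^2(0,T;H)\to\ell^2$ defined by $L_t(v):=\calt(v)(t)$ is itself Hilbert--Schmidt. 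Indeed, a direct computation of its adjoint yields $(L_t^*\phi)(s)=\sigma(s,\xi(s))^*\phi\,\1_{[0,t]}(s)$, and summing over any orthonormal basis $\{e_j\}$ of $\ell^2$,
$$\|L_t\|_{L_2}^2=\sum_j\int_0^t\|\sigma(s,\xi(s))^*e_j\|_H^2\,ds
=\int_0^t\|\sigma(s,\xi(s))\|_{L_2(H,\ell^2)}^2\,ds\le MT<\infty.$$
Hence $L_t$ is compact and therefore maps the weakly convergent sequence $\{v_n\}$ to a strongly convergent one in $\ell^2$, giving (a). For (b), Cauchy--Schwarz together with \eqref{plan1} gives, for any $0\le t_1<t_2\le T$,
$$\|\calt(v)(t_2)-\calt(v)(t_1)\|
\le\bigl(M(t_2-t_1)\bigr)^{1/2}\|v\|_{L^2(0,T;H)},$$
and since weakly convergent sequences in $L^2(0,T;H)$ are norm-bounded, the family $\{\calt(v_n)\}$ is uniformly equicontinuous on $[0,T]$.

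Combining (a) and (b) concludes (i): were $\|\calt(v_n)-\calt(v)\|_{C([0,T],\ell^2)}\not\to 0$, one could extract $n_k\to\infty$ and $t_k\to t^*\in[0,T]$ with $\|\calt(v_{n_k})(t_k)-\calt(v)(t_k)\|\ge\eps_0>0$; equicontinuity of $\{\calt(v_{n_k})\}$ together with continuity of $\calt(v)$ would then force $\|\calt(v_{n_k})(t^*)-\calt(v)(t^*)\|\ge\eps_0/2$ for large $k$, contradicting (a) at $t=t^*$. For (ii), any norm-bounded sequence in the reflexive space $L^2(0,T;H)$ admits a weakly convergent subsequence, which by (i) is mapped by $\calt$ to a norm-convergent sequence in $C([0,T],\ell^2)$; thus $\calt$ is compact.

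The only non-routine ingredient is the recognition that $L_t$ is Hilbert--Schmidt, which hinges on the $L^\infty$-bound on $s\mapsto\|\sigma(s,\xi(s))\|_{L_2(H,\ell^2)}$ afforded by $\xi\in L^\infty(0,T;\ell^2)$; no serious obstacle is anticipated.
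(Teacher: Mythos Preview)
Your proof is correct and follows the same overall architecture as the paper (pointwise compactness plus equicontinuity, followed by an Arzel\`a--Ascoli/subsequence argument, with (ii) deduced from (i) via reflexivity). The genuine difference lies in how the pointwise step is handled. The paper first observes that $\calt$ is bounded linear and hence weak-to-weak continuous, then shows that for each fixed $t$ the set $\{\calt(v_n)(t)\}$ is totally bounded in $\ell^2$ by an explicit tail estimate: it projects onto $\text{span}\{e_j:|j|\le m\}$ and uses $\|Q_m\sigma(s,\xi(s))\|_{L_2(H,\ell^2)}\to 0$ together with dominated convergence to make the tails uniformly small in $n$. Only after Arzel\`a--Ascoli yields a convergent subsequence does the paper invoke the weak continuity to identify the limit as $\calt(v)$, and then a separate contradiction argument upgrades subsequential to full convergence. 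Your route is more direct: recognising that $L_t=\calt(\cdot)(t)$ is Hilbert--Schmidt from $L^2(0,T;H)$ to $\ell^2$ (with $\|L_t\|_{L_2}^2=\int_0^t\|\sigma(s,\xi(s))\|_{L_2(H,\ell^2)}^2\,ds$) immediately gives compactness of $L_t$, hence strong pointwise convergence with the limit already identified; no separate weak-continuity step or limit identification is needed. Both arguments rest on the same underlying fact---that $s\mapsto\|\sigma(s,\xi(s))\|_{L_2(H,\ell^2)}$ is in $L^\infty(0,T)$---but yours packages it more cleanly, while the paper's hands-on tail estimate makes the finite-dimensional approximation explicit.
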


    \begin{proof}
    (i). 
    Note that the operator  $\calt: L^2(0,T; H)
\to C([0,T],   \ell^2)$  is well defined.
Indeed, by \eqref{sigrow1} and \eqref{defsigma3}
we have, for every $v\in  L^2(0,T; H)$,
$$
\int_0^T
\| \sigma (s, \xi (s)) v(s) \| ^2 ds
\le
\int_0^T
\| \sigma (s, \xi (s))\|_{L(H,\ell^2)}^2 
\| v(s) \| ^2_H ds
$$
$$
\le 2 
\int_0^T
\sum_{k=1}^\infty
\left (\| h_k(s) \|^2
+  
\| \sigma_k (\xi(s)) \| ^2
\right ) \| v(s)\|^2_H  ds 
$$
$$
\le   
\int_0^T \left ( 2
\sum_{k=1}^\infty
\| h_k(s) \|^2
+  4\alpha^2\|\delta\|^2
(1+ \|\xi(s)\|^2) 
\right )
  \| v(s)\|^2_H  ds 
$$
\be\label{wc_difp1}
\le   
  \left ( 2
\sum_{k=1}^\infty
\| h_k \|^2_{L^\infty(0,T; \ell^2)}
+  4\alpha^2\|\delta\|^2
(1+ \|\xi\|^2_{L^\infty(0,T; \ell^2)}) 
\right )
  \int_0^T \| v(s)\|^2_H  ds <\infty,
\ee
which implies that
$\calt(v)$ as given by \eqref{wc_dif 1}
belongs to $   C([0,T],   \ell^2)$
for all $v\in L^2(0,T; H)$.

It is evident 
that $\calt:  L^2(0,T; H)
\to    C([0,T],   \ell^2)$ is linear.
On the other hand, 
 By \eqref{wc_dif 1}  we have
 for all $v\in L^2(0,T; H)$,
$$
\| \calt (v) \|^2_{C([0,T],   \ell^2)}
\le
\left ( \int_0^T
\| \sigma (s, \xi (s)) v(s)\|  ds
\right )^2
\le
T   \int_0^T
\| \sigma (s, \xi (s)) v(s)\|^2  ds,
$$
which along with
\eqref{wc_difp1} shows that
 $\calt:  L^2(0,T; H)
\to    C([0,T],   \ell^2)$ 
    is bounded.

    Since 
    $\calt:  L^2(0,T; H)
\to    C([0,T],   \ell^2)$ is linear and continuous
in the strong topology, we know that
  $\calt:  L^2(0,T; H)
\to    C([0,T],   \ell^2)$
is also continuous in the weak topology;
 that is, 
  if   $v_n \to v$ weakly in 
  $L^2(0,T; H)$,
  then
  $ \calt(v_n)  \to \calt (v) $  weakly
  in   $ C([0,T],   \ell^2)$.
  Next, we prove actually
    $ \calt(v_n)   \to \calt (v) $  strongly
  in   $ C([0,T],   \ell^2)$  for which we need to
  verify:
  \begin{enumerate}
  \item[(a)] For every $t\in [0,T]$,
  the set $\{ \calt(v_n)(t): n\in \N\}$ is precompact in
  $\ell^2$.
  \item[(b)] The  sequence $\{\calt(v_n)\}_{n=1}^\infty$
  is equicontinuous on $[0,T]$.
  \end{enumerate}
    
    The equicontinuity of 
    $\{\calt(v_n)\}_{n=1}^\infty$
    follows from \eqref{wc_difp1}
    and the boundedness
    of $\{v_n\}_{n=1}^\infty$
    in $L^2(0,T; H)$
    due to the fact that  $v_n \to v$ weakly in 
  $L^2(0,T; H)$. It remains to show (a)
  for which we will prove 
  the set $\{ \calt(v_n)(t): n\in \N\}$
  is totally bounded in $\ell^2$.

  For every $j\in \Z$, let $e_j 
  = (\delta_{j,k})_{k\in \Z}$.
  Then $\{e_j\}_{j\in \Z}$  
  is 
  an orthonormal basis of $\ell^2$.
   Given $m\in \N$,  let $P_m:
  \ell^2 \to \ \text{span}\{e_j: |j| \le m\}$
  be the projection operator
  and $ Q_m =I-P_m$.
  Given $t\in [0,T]$, since
  $\sigma (t, \xi(t)): H \to \ell^2$ is Hilbert-Schmidt we  
  get 
  $$
 \lim_{m\to \infty}
  \| Q_m \sigma (t, \xi(t))\|_{L_2(H, \ell^2)}^2
  =0,
  $$
  which along with the dominated
  convergence theorem implies that
  for every $t\in [0,T]$,
  \be\label{wc_difp1a} 
  \lim_{m\to \infty}
   \int_0^t
   \| Q_m \sigma (s, \xi(s))\|_{L_2(H, \ell^2)}^2
   ds
  =0.
 \ee

 On the  other hand,  by    \eqref{wc_dif 1} we have
  $$ 
\| \calt (v_n) (t)\|
 \le 
\int_0^t
\| \sigma (s, \xi (s)) v_n (s) \|   ds
\le
\int_0^t
\| \sigma (s, \xi (s))\|_{L_2 (H,\ell^2)}  
\| v_n (s) \|  _H ds
$$
\be\label{wc_difp3}
\le
\left (
\int_0^t
\| \sigma (s, \xi (s))\|_{L_2 (H,\ell^2)} ^2 ds
\right )^{\frac 12} 
\left (
\int_0^t \| v_n (s) \|^2  _H ds
\right )^{\frac 12}.
\ee
Similarly, for every $m\in \N$,  we have
  $$ 
\|Q_m  \calt (v_n) (t)\|
 \le 
\int_0^t
\|Q_m \sigma (s, \xi (s)) v_n (s) \|   ds
$$
\be\label{wc_difp3a}
\le
\left (
\int_0^t
\|Q_m  \sigma (s, \xi (s))\|_{L_2 (H,\ell^2)} ^2 ds
\right )^{\frac 12} 
\left (
\int_0^t \| v_n (s) \|^2  _H ds
\right )^{\frac 12}.
\ee

  Since $\{v_n\}_{n=1}^\infty$
   is bounded  in $L^2(0,T; H)$,
   we  see  from \eqref{wc_difp3}-\eqref{wc_difp3a}
    that
   there exists  a positive number $c_1$
   independent of $n, m\in \N$ such that
 \be\label{wc_difp4}
\| \calt (v_n) (t)\|
 \le c_1,
 \quad \forall \ n\in \N,
 \ee
 and
  \be\label{wc_difp4a}
 \|Q_m  \calt (v_n) (t)\|
 \le   c_1
\left (
\int_0^t
\|Q_m  \sigma (s, \xi (s))\|_{L_2 (H,\ell^2)} ^2 ds
\right )^{\frac 12} ,
 \quad \forall \ n, m \in \N.
\ee
By \eqref{wc_difp1a} we find that
the right-hand side of \eqref{wc_difp4a}
converges to zero as $m\to \infty$,
and hence,  for every $\eta>0$,
   there exists $m_0\in \N$ such that
   for all  $n\in \N$  and $m\ge m_0$,  
      \be\label{wc_difp5} 
 \|Q_m  \calt (v_n) (t)\|
 <{\frac 14} \eta.
 \ee
 
 By \eqref{wc_difp4} we 
 see that
 $\{P_{m_0} 
   ( \calt (v_n) (t) ) \}_{n=1}^\infty$
    is bounded in a $(2m_0 +1)$-dimensional space,
    and hence it is precompact.
    Consequently,  
    $\{P_{m_0} 
   ( \calt (v_n) (t) ) \}_{n=1}^\infty$
   has a finite  open cover of radius
   ${\frac 14}\eta$,
   which along with \eqref{wc_difp5}
   shows that the sequence
       $\{ 
   \calt (v_n) (t)  \}_{n=1}^\infty$
   has a finite  open cover of radius
   $ \eta$.
   In other words, 
   the sequence $\{ 
   \calt (v_n) (t)  \}_{n=1}^\infty$
   is totally bounded and hence precompact
   in $\ell^2$.
   
   Then by (a) and (b) we infer that
   there exists a subsequence
   $\{v_{n_k}\}_{k=1}^\infty$ of
   $\{v_{n} \}_{n=1}^\infty$
   such that
     $ \calt(v_{n_k})   \to \calt (v) $  strongly
  in   $ C([0,T],   \ell^2)$.
  By  a contradiction argument, we conclude that
  the entire sequence    $ \calt(v_{n })   \to \calt (v) $  strongly
  in   $ C([0,T],   \ell^2)$.

 (ii). Let $\{v_n\}_{n=1}^\infty$ be a bounded sequence
 in $L^2(0,T; H)$. We will prove
 the sequence
  $\{ \calt (v_n)   \}_{n=1}^\infty$
  is precompact in $C([0,T], \ell^2)$.
  Since   $\{v_n\}_{n=1}^\infty$ 
  is  bounded, there exists $v\in L^2(0,T; H)$
  and a subsequence $\{v_{n_k}\}_{k=1}^\infty$ 
  such that
  $v_{n_k} \to v$ weakly in $L^2(0,T; H)$.
  Then by (i) we find that
   $  \calt (v_{n_k} )  \to \calt (v)$
   strongly in   $C([0,T], \ell^2)$,
   which completes the proof.
    \end{proof}

 \begin{lem}\label{wc_sol}
 Suppose that
 \eqref{F1}, \eqref{sigma1}-\eqref{delta1}
 and \eqref{gh} hold.
Let
  $v, v_n \in L^2(0,T; H)$ 
  for all  $n\in \N$
  and  $u_v$,  $u_{v_n}$
  be the solutions of 
  \eqref{contr1}-\eqref{contr2}
  corresponding to $v$  and $v_n$,
  respectively.
  If   $v_n \to v$ weakly in 
  $L^2(0,T; H)$,
  then
  $ u_{v_n} \to u_v $ strongly
  in  $C([0,T], \ell^2)$. 
\end{lem}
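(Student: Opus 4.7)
The plan is as follows. Since $v_n \to v$ weakly in $L^2(0,T;H)$, the sequence $\{\|v_n\|_{L^2(0,T;H)}\}$ is bounded by some $R_2$, and therefore Lemma \ref{exis_sol}, in particular \eqref{exis_sol 2}, yields a uniform bound $\|u_{v_n}\|_{C([0,T],\ell^2)} + \|u_v\|_{C([0,T],\ell^2)} \le R$ for some $R$ independent of $n$. In particular, $u_v\in L^\infty(0,T;\ell^2)$, and the local Lipschitz constants $L_R$ from \eqref{f1a1} and \eqref{sigrow2} are uniform in $n$. Setting $w_n = u_{v_n}-u_v$, I would subtract the equations \eqref{contr3}, differentiate $\|w_n(t)\|^2$, and use the boundedness of $A$ on $\ell^2$ together with the monotonicity \eqref{f2} to drop the non-positive terms $-2\nu\|Bw_n\|^2$ and $-2(f(u_{v_n})-f(u_v),w_n)$. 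Splitting the diffusion contribution as
$$\sigma(t,u_{v_n})v_n-\sigma(t,u_v)v = \bigl[\sigma(t,u_{v_n})-\sigma(t,u_v)\bigr]v_n + \sigma(t,u_v)[v_n-v],$$
estimating the first piece via \eqref{sigrow2} and Cauchy--Schwarz by $\sqrt{L_R}\|\delta\|\|v_n(t)\|_H\|w_n\|^2$, and using $w_n(0)=0$, I arrive at
$$\|w_n(t)\|^2 \le \int_0^t a_n(s)\|w_n(s)\|^2\,ds + J_n(t),$$
where $a_n(s) = -2\gamma + 2\sqrt{L_R}\|\delta\|\|v_n(s)\|_H$ and $J_n(t) = 2\int_0^t (\sigma(s,u_v(s))[v_n(s)-v(s)],w_n(s))\,ds$.

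The main obstacle is showing $\sup_{t\in[0,T]}|J_n(t)|\to 0$, since weak convergence of $v_n$ alone cannot be passed through pointwise multiplication by the variable factor $w_n(s)$. To bypass this, I would introduce
$$\psi_n(t) := \int_0^t \sigma(s,u_v(s))[v_n(s)-v(s)]\,ds,$$
observe that $u_v \in L^\infty(0,T;\ell^2)$ so Lemma \ref{wc_dif}(i) applies with $\xi = u_v$, and conclude that $\psi_n \to 0$ strongly in $C([0,T],\ell^2)$. Then I would integrate by parts in time in the Hilbert space $\ell^2$, which is valid since $\psi_n$ and $w_n$ are both absolutely continuous with derivatives in $L^2(0,T;\ell^2)$:
$$J_n(t) = 2(\psi_n(t),w_n(t)) - 2\int_0^t (\psi_n(s),w_n'(s))\,ds.$$
The first term is bounded by $2\|\psi_n\|_{C([0,T],\ell^2)}\|w_n\|_{C([0,T],\ell^2)}$, hence tends to zero. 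For the second term, differentiating the equation \eqref{contr3} for $w_n$ and using the boundedness of $A$, \eqref{f1a1}, \eqref{sigrow1}, and \eqref{gh} together with the uniform bounds on $\|u_{v_n}\|$, $\|u_v\|$, and $\|v_n\|_{L^2}$ shows that $\|w_n'\|_{L^1(0,T;\ell^2)}$ is uniformly bounded, so
$$\Bigl|\int_0^t (\psi_n,w_n')\,ds\Bigr| \le \|\psi_n\|_{C([0,T],\ell^2)}\,\|w_n'\|_{L^1(0,T;\ell^2)} \to 0.$$

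Finally, since $\int_0^T a_n(s)\,ds \le 2|\gamma|T + 2\sqrt{L_R T}\|\delta\|R_2$ is uniformly bounded, Gr\"onwall's inequality applied to
$$\|w_n(t)\|^2 \le \sup_{s\in[0,T]}|J_n(s)| + \int_0^t a_n(s)\|w_n(s)\|^2\,ds$$
gives
$$\|w_n\|_{C([0,T],\ell^2)}^2 \le \sup_{s\in[0,T]}|J_n(s)|\cdot\exp\!\Bigl(\int_0^T a_n(s)\,ds\Bigr) \to 0,$$
which is the desired strong convergence $u_{v_n} \to u_v$ in $C([0,T],\ell^2)$. The delicate step is the integration by parts argument of the second paragraph, which is precisely where the weak-to-strong upgrade provided by Lemma \ref{wc_dif} is essential.
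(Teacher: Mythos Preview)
Your proposal is correct and follows essentially the same route as the paper's proof: uniform a~priori bounds from Lemma~\ref{exis_sol}, the energy identity for $w_n=u_{v_n}-u_v$ with the monotonicity \eqref{f2} to discard the drift, the same splitting of $\sigma(t,u_{v_n})v_n-\sigma(t,u_v)v$, the auxiliary function $\psi_n$ together with Lemma~\ref{wc_dif}, integration by parts in time, a uniform pointwise bound on $\|w_n'(t)\|$ of the form $C(1+\|v_n(t)\|_H+\|v(t)\|_H)$, and Gronwall. The only cosmetic difference is that the paper carries the term $2(\psi_n(t),w_n(t))$ through the Gronwall step (absorbing it via Young's inequality into $\tfrac12\sup_{r\le t}\|w_n(r)\|^2$), whereas you bound $\sup_t|J_n(t)|$ directly using the a~priori estimate $\|w_n\|_{C([0,T],\ell^2)}\le 2R$ before invoking Gronwall; both are valid.
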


     \begin{proof}
     Suppose $v_n \to v$ weakly in 
  $L^2(0,T; H)$. Then 
  $\{v_n\}_{n=1}^\infty$ is bounded
  in  $L^2(0,T; H)$.
   Similar to \eqref{exis_solp11},  we find that
  there exists   $c_1=c_1(T)>0 $ such
  that
   \be\label{wc_solp0}
  \sup_{0\le t\le T}
  \left (
  \| u_{v_n}(t) \| + \| u_v (t) \|
  \right )
  \le c_1, \quad \forall \ n\in \N.
\ee
  By \eqref{contr1}-\eqref{contr2} we get
   $$
    {\frac d{dt}}
   ( u_{v_n}-u_v)
   =-\nu A  ( u_{v_n}-u_v)
   - (f(u_{v_n}) -f(u_v))
   $$
    \be\label{wc_solp1}
   -\gamma  ( u_{v_n}-u_v)
   +\sigma(t,u_{v_n}) v_n -\sigma (t, u_v)v.
   \ee
   By
   \eqref{f1a1}, \eqref{sigrow1}, \eqref{gh},
   \eqref{defsigma2} and 
    \eqref{wc_solp0} we infer that
     \be\label{wc_solp1a}
   \|  {\frac d{dt}}
   ( u_{v_n}(t) -u_v (t))\| 
    \le c_2 (1+ \| v_n (t)  \|_H + \| v(t)  \|_H ),
    \quad \forall\  n \in \N,
    \ee
   where $c_2=c_2(T)>0$ is a constant
    independent of $n\in \N$.
    Similar  to \eqref{exis_solp12},
    by \eqref{wc_solp1} we have 
         \be\label{wc_solp2}
    {\frac d{dt}}
    \| u_{v_n}-u_v\|^2
  \le  
    -2\gamma \| u_{v_n} -u_v \|^2
    +2\left (
    \sigma (t, u_{v_n})v_n - \sigma (t, u_v)v,
    u_{v_n} -u_v 
    \right ).
    \ee
    For the last term in \eqref{wc_solp2},
    by \eqref{defsigma2} we have
    $$
    2\left (
    \sigma (t, u_{v_n})v_n - \sigma (t, u_v)v,
    u_{v_n} -u_v 
    \right )
    $$
     $$
   =  2\left (
     ( \sigma (t, u_{v_n})-  \sigma (t, u_v) ) v_n  , 
    u_{v_n} -u_v 
    \right )
    + 
    2\left (
       \sigma (t, u_v) ( v_n  -v), 
    u_{v_n} -u_v 
    \right )
    $$
    $$
    =2\left (
    \sum_{k=1}^\infty
  \left (
    \sigma_k (u_{v_n}) - \sigma_k (u_v)
    \right )v_{n,k},\ 
     u_{v_n} -u_v 
    \right )
    $$
      \be\label{wc_solp3}
    +2\left (
    \sum_{k=1}^\infty
  \left (h_k(t) +  \sigma_k (u_v)
    \right )(v_{n,k}-v_k),\ 
     u_{v_n} -u_v 
    \right ).
    \ee
    For each $n\in \N$  and $t\in [0,T]$, set 
     \be\label{wc_solp4}
    \psi_n  (t)
    =\int_0^t
     \sum_{k=1}^\infty
  \left (h_k(s) +  \sigma_k (u_v(s))
    \right )(v_{n,k}(s) -v_k(s) ) ds 
    =\int_0^t
    \sigma (s, u_v(s)) (v_n(s) - v(s)) ds.
\ee
   Since  $v_n \to v$ weakly in 
  $L^2(0,T; H)$,
by Lemma \ref{wc_dif} we get
  \be\label{wc_solp4a}
    \psi_n 
    \to 0  \  \text{ in } \  C([0,T], \ell^2)
    \  \text{ as } \ n \to \infty,
    \ee

Note that
  $$ 
    2\left (
    \sum_{k=1}^\infty
  \left (h_k(t) +  \sigma_k (u_v)
    \right )(v_{n,k}-v_k),\ 
     u_{v_n} -u_v 
    \right )
    =2
        \left ({\frac d{dt}} \psi_n,\ 
     u_{v_n} -u_v 
    \right )
    $$
   \be\label{wc_solp5}
    =2 {\frac d{dt}}
        \left (  \psi_n (t) ,\ 
     u_{v_n}(t) -u_v (t)
    \right )-
    2
       \left (  \psi_n (t) ,\ 
     {\frac {d}{dt}} (u_{v_n}(t) -u_v (t))
    \right ).
    \ee
    By \eqref{wc_solp3}  and \eqref{wc_solp5} we obtain
       $$
    2\left (
    \sigma (t, u_{v_n})v_n - \sigma (t, u_v)v,
    u_{v_n} -u_v 
    \right )
    $$
     $$
    =2\left (
    \sum_{k=1}^\infty
  \left (
    \sigma_k (u_{v_n}) - \sigma_k (u_v)
    \right )v_{n,k},\ 
     u_{v_n} -u_v 
    \right )
    $$
       \be\label{wc_solp6}
     +2{\frac d{dt}}
        \left (  \psi_n (t) ,\ 
     u_{v_n}(t) -u_v (t)
    \right )-
    2
       \left (  \psi_n (t) ,\ 
     {\frac {d}{dt}} (u_{v_n}(t) -u_v (t))
    \right ).
    \ee
    It follows from \eqref{wc_solp2}
    and \eqref{wc_solp6} that
        $$
    {\frac d{dt}}
    \| u_{v_n}-u_v\|^2
  \le  
    -2\gamma \| u_{v_n} -u_v \|^2
   +  2\left (
    \sum_{k=1}^\infty
  \left (
    \sigma_k (u_{v_n}) - \sigma_k (u_v)
    \right )v_{n,k},\ 
     u_{v_n} -u_v 
    \right )
    $$
       \be\label{wc_solp7}
        +2{\frac d{dt}}
        \left (  \psi_n (t) ,\ 
     u_{v_n}(t) -u_v (t)
    \right )
    -
    2
       \left (  \psi_n (t) ,\ 
     {\frac {d}{dt}} (u_{v_n}(t) -u_v (t))
    \right ).
    \ee
    
    We  now deal with the right-hand side of 
    \eqref{wc_solp7}.
    For the second term on the 
    right-hand side of \eqref{wc_solp7},
    by \eqref{sigrow2}
    and  \eqref{wc_solp0}  we get
   $$
    2\left (
    \sum_{k=1}^\infty
  \left (
    \sigma_k (u_{v_n}) - \sigma_k (u_v)
    \right )v_{n,k},\ 
     u_{v_n} -u_v 
    \right )
    $$
  \be\label{wc_solp7a}
    \le 2 \left (
    \sum_{k=1}^\infty
    \| \sigma_k(u_{v_n})-\sigma_k (u_v )\|^2
    \right )^{\frac 12}\| v_n \|_H\| u_{v_n} -u_v\|
    \le 
    c_3 \|\delta\| \| v_n \|_H\| u_{v_n} -u_v\|^2,
   \ee
    where $c_3=c_3(T)>0$ is  a constant independent of $n
   \in \N$.

%
%

        For the last term on the 
    right-hand side of \eqref{wc_solp7},
    by \eqref{wc_solp1a}
 we have
     \be\label{wc_solp9}
    2 \Big | 
       \left (  \psi_n (t) ,\ 
     {\frac {d}{dt}} (u_{v_n}(t) -u_v (t))
    \right )
     \Big |
     \le
     2 c_2 (1+ \| v_n (t)  \|_H  + \| v(t)  \|_H )
     \| \psi_n (t) \|  .
     \ee
     It follows from \eqref{wc_solp7}-\eqref{wc_solp9} that
         $$
    {\frac d{dt}}
    \| u_{v_n}(t)-u_v(t)\|^2
  \le  
   ( -2\gamma 
   + c_3 \| \delta \| \|v_n (t) \|_H )
   \| u_{v_n}(t) -u_v(t) \|^2
   $$
    \be\label{wc_solp10}
   +
        2{\frac d{dt}}
        \left (  \psi_n (t) ,\ 
     u_{v_n}(t) -u_v (t)
    \right )
   +
          2 c_2 (1+ \| v_n (t)  \|_H  + \| v(t)  \|_H )
     \| \psi_n (t) \|.
     \ee
     Due to  $u_{v_n}(0)=u_v(0)=u_0$,
     by integrating  \eqref{wc_solp10}
     on $(0,t)$  we get
       $$ 
    \| u_{v_n}(t)-u_v(t)\|^2
  \le  \int_0^t
   ( -2\gamma 
   + c_3 \| \delta \| \|v_n (s) \|_H )
   \| u_{v_n}(s) -u_v(s) \|^2 ds
   $$
   $$
   +
        2 
        \left (  \psi_n (t) ,\ 
     u_{v_n}(t) -u_v (t)
    \right )
   +
          2 c_2 \int_0^t (1+ \| v_n (s)  \|_H  + \| v(s)  \|_H )
     \| \psi_n (s) \| ds,
   $$
   which shows that for all $t\in [0,T]$,
     $$ 
    \sup_{0\le r \le t}
    \| u_{v_n}(r)-u_v(r)\|^2
  \le  \int_0^t
   ( -2\gamma 
   + c_3 \| \delta \| \|v_n (s) \|_H )
   \| u_{v_n}(s) -u_v(s) \|^2 ds
   $$
    \be\label{wc_solp10a}
   +
        2    \sup_{0\le r \le t}
        \left (  \psi_n (r) ,\ 
     u_{v_n}(r) -u_v (r)
    \right )
   +
          2 c_2 \int_0^t (1+ \| v_n (s)  \|_H  + \| v(s)  \|_H )
     \| \psi_n (s) \| ds.
     \ee
     For the first term on the right-hand side
     of \eqref{wc_solp10a} we have
     $$
        \int_0^t
   ( -2\gamma 
   + c_3 \| \delta \| \|v_n (s) \|_H )
   \| u_{v_n}(s) -u_v(s) \|^2 ds
   $$
    \be\label{wc_solp10b} 
   \le
     \int_0^t
   ( -2\gamma 
   + c_3 \| \delta \| \|v_n (s) \|_H )
   \sup_{0\le r \le s}
   \| u_{v_n}(r) -u_v(r) \|^2 ds.
   \ee
      For the second  term on the right-hand side
     of \eqref{wc_solp10a} we get
     $$
      2    \sup_{0\le r \le t}
        \left (  \psi_n (r) ,\ 
     u_{v_n}(r) -u_v (r)
    \right )
    \le
    2 \| \psi_n \|_{C([0,T], \ell^2)}
       \sup_{0\le r \le t} \|
     u_{v_n}(r) -u_v (r)
    \|
     $$
      \be\label{wc_solp10c} 
      \le
      {\frac 12}
       \sup_{0\le r \le t} \|
     u_{v_n}(r) -u_v (r)
    \|^2
    + 2 \| \psi_n \|_{C([0,T], \ell^2)}^2.
    \ee
      For the last  term on the right-hand side
     of \eqref{wc_solp10a} we  have, 
     for $t\in [0,T]$,
     $$
     2 c_2 \int_0^t (1+ \| v_n (s)  \|_H  + \| v(s)  \|_H )
     \| \psi_n (s) \| ds
    $$
      \be\label{wc_solp10d} 
      \le
       2 c_2T^{\frac 12}
       \| \psi_n  \| _{C([0,T], \ell^2)}
       \left (
       T^{\frac 12} + 
         \| v_n   \|_{L^2(0,T; H)}
         +   \| v  \| _{L^2(0,T; H)} \right ) .
     \ee
     
     It follows from \eqref{wc_solp10a}-\eqref{wc_solp10d}
     that  for all $t\in [0,T]$,
     $$ 
    \sup_{0\le r \le t}
    \| u_{v_n}(r)-u_v(r)\|^2
  \le   
        2
     \int_0^t
   ( -2\gamma 
   + c_3 \| \delta \| \|v_n (s) \|_H )
   \sup_{0\le r \le s}
   \| u_{v_n}(r) -u_v(r) \|^2 ds 
     $$
       \be\label{wc_solp10e} 
      + 4 \| \psi_n \|_{C([0,T], \ell^2)}^2 
   +
       4 c_2T^{\frac 12}
       \| \psi_n  \| _{C([0,T], \ell^2)}
       \left (
       T^{\frac 12} + 
         \| v_n   \|_{L^2(0,T; H)}
         +   \| v  \| _{L^2(0,T; H)} \right ) .
     \ee
     By \eqref{wc_solp10e} and Gronwall\rq{}s lemma we obtain,
     for all 
    $t\in [0,T]$,
      $$
    \sup_{0\le r \le t}
    \| u_{v_n}(r)-u_v(r)\|^2
  \le   
  4 \| \psi_n \|_{C([0,T], \ell^2)}^2
   e^{
  \int_0^t 
   (-4\gamma 
   +2 c_3 \| \delta \| \|v_n (s) \|_H ) ds
  }
   $$
   $$
   + 
       4 c_2T^{\frac 12}
       \| \psi_n  \| _{C([0,T], \ell^2)}
       \left (
       T^{\frac 12} + 
         \| v_n   \|_{L^2(0,T; H)}
         +   \| v  \| _{L^2(0,T; H)} \right )
     e^{
  \int_0^t 
   (-4\gamma 
   +2 c_3 \| \delta \| \|v_n (s) \|_H ) ds
  },
  $$ 
  and hence
     $$
    \sup_{0\le r \le T}
    \| u_{v_n}(r)-u_v(r)\|^2
  \le   
  4 \| \psi_n \|_{C([0,T], \ell^2)}^2
   e^{  
   (-4\gamma  T
   +2 c_3 T^{\frac 12}  \| \delta \| \|v_n   \|_{L^2(0,T; H)}  
  }
   $$
      \be\label{wc_solp11} 
   + 
       4 c_2T^{\frac 12}
       \| \psi_n  \| _{C([0,T], \ell^2)}
       \left (
       T^{\frac 12} + 
         \| v_n   \|_{L^2(0,T; H)}
         +   \| v  \| _{L^2(0,T; H)} \right )
    e^{  
   (-4\gamma  T
   +2 c_3 T^{\frac 12}  \| \delta \| \|v_n   \|_{L^2(0,T; H)}  
  }.
\ee
   Since $\{v_n\}_{n=1}^\infty$  is bounded
 in $L^2(0,T; H)$, by \eqref{wc_solp11}
 we infer that there exists a positive number 
 $c_4=c_4(T)$ independent of $n\in \N$ such that
 \be\label{wc_solp12}
   \sup_{t\in [0,T]}
    \| u_{v_n}(t)-u_v(t)\|^2
  \le c_4 
  \left ( \| \psi_n \|^2_{C([0,T], \ell^2) }
  + \| \psi_n \|_{C([0,T], \ell^2) }
  \right ).
  \ee
It follows from  
  \eqref{wc_solp4a}
  and
  \eqref{wc_solp12}
   that
  $$
   \sup_{t\in [0,T]}
    \| u_{v_n}(t)-u_v(t)\|^2 
    \to 0 
    \  \text{ as } \ n \to \infty,
    $$
    which   concludes  the proof.
      \end{proof}

  We now define   $\calg^0: C([0,T], U) \to C([0,T], \ell^2)$
by, for every $\xi \in  C([0,T], U)$,
\be\label{calg0}
\calg^0 (\xi)
=
\left \{
\begin{array}{ll}
u_v & \text{ if } \xi= \int_0^\cdot v(t) dt
\ \text{ for some } v\in L^2(0,T; H);\\
0, &  \text{ otherwise} ,
\end{array}
\right.
\ee
 where $u_v$ is the
 solution
 of \eqref{contr1}-\eqref{contr2}.
 
 Given  
 $\phi \in  C([0,T], \ell^2)$, denote by
\be\label{rate_LS}
 I(\phi)
 =\inf
 \left \{
 {\frac 12} \int_0^T \| v(s)\|_H^2 ds:
 \ v\in L^2(0,T; H), \ u_v =\phi
 \right \},
\ee
 where $u_v$ is the
 solution
 of \eqref{contr1}-\eqref{contr2}.
 Again, by default, the infimum of the empty
 set is taken to be $\infty$.

 We will prove that 
 the family $\{u^\eps\}$
 satisfies the Laplace principle
 in $C([0,T],\ell^2)$
 with the  rate function
as given by \eqref{rate_LS}.
To that end, we need to show
 $\calg^\eps$ and $\calg^0$
 fulfill conditions  ({\bf  H1})
 and  ({\bf  H2})
 in terms of  Proposition \ref{LP1}.
 The following lemma confirms that 
 condition  ({\bf  H2}) is satisfied.

 \begin{lem}\label{prh1}
 Suppose that
 \eqref{F1}, \eqref{sigma1}-\eqref{delta1}
 and \eqref{gh} hold.
Then for every $N<\infty$, the set
\be\label{prh1 1}
K_N
=\left \{
\calg^0
\left (
\int_0^\cdot v(t) dt
\right ) : \  v\in S_N
\right \}
\ee
is a compact subset
of $C([0,T], \ell^2)$,
where $S_N$ is the set as defined by
\eqref{pre0a}.
 \end{lem}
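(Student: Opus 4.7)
The plan is to realize $K_N$ as the image of a compact set under a continuous map, using the weak-to-strong continuity established in Lemma~\ref{wc_sol}. Unwinding the definition of $\calg^0$ in \eqref{calg0}, for each $v\in S_N\subset L^2(0,T;H)$ one has $\calg^0(\int_0^\cdot v(t)\,dt)=u_v$, where $u_v$ is the unique solution of the controlled system \eqref{contr1}-\eqref{contr2} guaranteed by Lemma~\ref{exis_sol}. Therefore
\[
K_N = \{u_v : v\in S_N\} = \Phi(S_N),\qquad \Phi(v):=u_v.
\]

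First I would record that $S_N$, being the closed ball of radius $\sqrt{N}$ in the separable Hilbert space $L^2(0,T;H)$, is weakly compact by the Banach-Alaoglu theorem together with reflexivity; moreover, since $L^2(0,T;H)$ is separable, the weak topology restricted to $S_N$ is metrizable, so weak compactness is equivalent to sequential weak compactness. Because $C([0,T],\ell^2)$ is itself a metric space, establishing sequential compactness of $\Phi(S_N)$ in the strong topology will suffice to conclude compactness.

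The core step is to verify that $\Phi$ sends weakly convergent sequences in $S_N$ to strongly convergent sequences in $C([0,T],\ell^2)$. Let $\{v_n\}\subset S_N$ be arbitrary. By sequential weak compactness of $S_N$, there exists a subsequence $\{v_{n_k}\}$ and some $v\in S_N$ (the set is weakly closed as a closed ball) such that $v_{n_k}\to v$ weakly in $L^2(0,T;H)$. Then Lemma~\ref{wc_sol} applies directly and yields $u_{v_{n_k}}\to u_v$ strongly in $C([0,T],\ell^2)$. Since $v\in S_N$, the limit $u_v$ belongs to $K_N$. This shows that every sequence in $K_N$ admits a subsequence converging in $C([0,T],\ell^2)$ to an element of $K_N$, which is precisely sequential compactness; combined with metrizability of $C([0,T],\ell^2)$, this gives compactness.

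I do not anticipate a genuine obstacle here: all the heavy lifting has already been done in Lemma~\ref{wc_sol}, whose proof handled the delicate weak-to-strong continuity through the compactness lemma \ref{wc_dif}. The present argument is a clean topological packaging: weak compactness of the input ball plus weak-to-strong continuity of the solution map equals strong compactness of the output set.
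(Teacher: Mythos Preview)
Your proposal is correct and follows essentially the same approach as the paper: identify $K_N=\{u_v:v\in S_N\}$, extract a weakly convergent subsequence from any $\{v_n\}\subset S_N$ by weak compactness of the ball, and invoke Lemma~\ref{wc_sol} to upgrade to strong convergence of the solutions in $C([0,T],\ell^2)$. Your version simply spells out the topological background (Banach--Alaoglu, metrizability) a bit more explicitly than the paper does.
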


 \begin{proof}
 By \eqref{calg0}  and \eqref{prh1 1} we see that
 $$ K_N
=\left \{
u_v : \  v\in S_N
\right \}
=
\left \{
u_v : \  v\in L^2(0,T; H), \ \int_0^T \| v(t)\|_H^2 dt
\le N
\right \},
$$
where $u_v$
is the solution of \eqref{contr1}-\eqref{contr2}.

Let $\{u_{v_n}\}_{n=1}^\infty$
 be a sequence
in $K_N$.
Then $ v_n \in L^2(0,T; H)$
and 
$\int_0^T \| v_n(t)\|_H^2 dt
\le N$,
which shows that
there exists $v\in S_N$ and a subsequence
$\{ {v_{n_k}}\}_{k=1}^\infty$
such that
$v_{n_k} \to v$ weakly in 
$L^2(0,T; H)$.
Then by Lemma \ref{wc_sol} we find that
$u_{v_{n_k}} \to u_v$ strongly 
in $C([0,T], \ell^2)$, as desired.
 \end{proof}

In order to  prove   
    ({\bf  H1}), we need the
    following  property of the measurable
    map 
   $\calg^\eps$.

 \begin{lem}\label{gep}
 Suppose that
 \eqref{F1}, \eqref{sigma1}-\eqref{delta1}
 and \eqref{gh} hold, and
 $v\in \cala_N$
for some  $N<\infty$.
If $u^\eps_v
=
 \calg^\eps
\left (
W +\eps^{-\frac 12}\int_0^\cdot
v (t) dt
\right )$, then
$u^\eps_v$ is 
the unique solution to  
 \be\label{gep 1}
 d   u_{v}^\eps  
 + \left (  \nu Au^\eps_{v}     
 +f(u^\eps_{v}  )  
+ \gamma  u^\eps_{v}  
\right )  dt  =
\left (
g (t)  
+\sigma(t, u^\eps_{v}  )  v
\right )  dt
 + \sqrt{\eps} \sigma(t, u^\eps_{v}  )  
   dW   ,
\ee
 with  initial condition 
 $
 u^\eps_{v} (0)=u_0\in\ell^2. $

  Furthermore,
     for each $R>0$  
    there  exists
       $C_2 =C_2  (R, T,N)>0$ such that  
       for any $u_{0} 
       \in \ell^2$ with 
    $\| u_{0} \|\le R  $   and any
    $v \in \cala_N$,
    the solution $u_{v}^\eps$    
    satisfies for all $\eps\in (0,1)$,
  \be\label{gep 2}
 \E\left (
 \| u^\eps_v \|^2_{C([0,T], \ell^2)}
 \right )
 \le C_2.
 \ee
 \end{lem}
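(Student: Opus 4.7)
The argument has two parts: identification of $u^\eps_v$ with the unique strong solution of the controlled SPDE \eqref{gep 1}, followed by the uniform moment bound \eqref{gep 2}.

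For the identification, since $v\in\cala_N$ gives $\int_0^T \|v(t)\|_H^2\,dt\le N$ almost surely, Novikov's condition holds trivially and Girsanov's theorem provides an equivalent probability measure $Q$ under which $\widetilde W(t):=W(t)+\eps^{-1/2}\!\int_0^t v(s)\,ds$ is a cylindrical $H$-Wiener process (cf.\ \cite{bud1, dup1}). Under $Q$, by the defining property of $\calg^\eps$, the process $u^\eps_v=\calg^\eps(\widetilde W)$ satisfies \eqref{intr5} with $W$ replaced by $\widetilde W$; substituting $d\widetilde W=dW+\eps^{-1/2}v\,dt$ yields \eqref{gep 1}. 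Because $P$ and $Q$ are equivalent, the equation holds $P$-almost surely, and pathwise uniqueness of \eqref{gep 1} follows from the same monotonicity and locally Lipschitz arguments used in Lemma \ref{exis_sol}, now adapted to the stochastic setting exactly as in \cite{wan2019}.

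For the a priori bound, apply It\^o's formula to $\|u^\eps_v(t)\|^2$. Using \eqref{f1}--\eqref{f2} to discard the non-positive contribution coming from $-2(f(u^\eps_v),u^\eps_v)$, Young's inequality on $2(g,u^\eps_v)$, the Cauchy--Schwarz bound $2|(\sigma v,u^\eps_v)|\le \|\sigma\|_{L_2}^2\|v\|_H^2+\|u^\eps_v\|^2$, and \eqref{sigrow1} to control $\|\sigma\|_{L_2}^2\le 2\sum_k\|h_k\|^2+4\alpha^2\|\delta\|^2(1+\|u^\eps_v\|^2)$ (also applied to the It\^o correction $\eps\|\sigma\|_{L_2}^2$ with $\eps<1$), one arrives at the pathwise differential inequality
\[
\|u^\eps_v(t)\|^2\le \|u_0\|^2+\int_0^t a(s)\|u^\eps_v(s)\|^2\,ds+K+2\sqrt{\eps}\,M_t,
\]
where $a(s)=c_1+c_2\|v(s)\|_H^2$ with $c_1,c_2$ depending only on $\gamma,\alpha,\|\delta\|$, $K$ is random but satisfies an almost-sure deterministic bound in terms of $T, N, \|g\|_{L^2(0,T;\ell^2)}, \|h_k\|_{L^\infty(0,T;\ell^2)},\alpha,\|\delta\|$, and $M_t=\int_0^t(u^\eps_v,\sigma(s,u^\eps_v)\,dW)$. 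The crucial observation is that $\int_0^T a(s)\,ds\le A_T$ almost surely with $A_T$ \emph{deterministic} thanks to $v\in\cala_N$. Pathwise Gronwall then yields
\[
\sup_{r\le t}\|u^\eps_v(r)\|^2\le \bigl[R^2+K+2\sqrt{\eps}\sup_{r\le t}|M_r|\bigr]e^{A_T}.
\]
Taking expectation, bounding $\E\sup_{r\le T}|M_r|$ via the Burkholder--Davis--Gundy inequality together with \eqref{sigrow1}, and absorbing $\phi(T):=\E\sup_{r\le T}\|u^\eps_v(r)\|^2$ on the left-hand side via Young's inequality (partitioning $[0,T]$ into sufficiently many subintervals if necessary so that the coefficient of $\phi$ becomes strictly less than one on each piece) closes the estimate and yields $\phi(T)\le C_2(R,T,N)$.

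\textbf{Main obstacle.} The control-coupled term $2(\sigma(s,u^\eps_v)v,u^\eps_v)$ forces a random coefficient $\|v(s)\|_H^2\|u^\eps_v(s)\|^2$ into the It\^o expansion, so a Gronwall argument taken in expectation fails because $v$ and $u^\eps_v$ are correlated through the dynamics. The key point is to run Gronwall \emph{pathwise} first, exploiting the almost-sure deterministic bound $\int_0^T\|v(s)\|_H^2\,ds\le N$ from $v\in\cala_N$, and only then pass to expectation. A secondary difficulty is that, because $\|\sigma\|_{L_2}$ has linear growth in $\|u^\eps_v\|$, the BDG bound contains a $\phi(T)$-contribution that is not automatically small; it must be absorbed by a Young-type inequality combined with subinterval iteration (or, equivalently, a stochastic Gronwall lemma), using the factor $\sqrt{\eps}<1$ and the deterministic control on the Gronwall exponent $A_T$.
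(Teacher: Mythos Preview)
Your identification argument via Girsanov is essentially identical to the paper's.

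For the moment bound \eqref{gep 2} your proof is correct but differs from the paper's in how the control term $2\int_0^t(\sigma(s,u^\eps_v)v,u^\eps_v)\,ds$ is handled. You bound it pointwise by $\|\sigma\|_{L_2}^2\|v\|_H^2+\|u^\eps_v\|^2$, which introduces the random product $\|v(s)\|_H^2\|u^\eps_v(s)\|^2$; you then exploit $\int_0^T\|v\|_H^2\le N$ a.s.\ to run a \emph{pathwise} Gronwall with deterministic exponent before taking expectations and closing with BDG and a second Gronwall (or subinterval iteration). The paper instead applies Cauchy--Schwarz \emph{in time} directly to the integrated control term,
\[
2\int_0^t|(\sigma v,u^\eps_v)|\,ds\le 2N^{1/2}\sup_{s\le t}\|u^\eps_v(s)\|\Bigl(\int_0^t\|\sigma\|_{L_2}^2\,ds\Bigr)^{1/2}\le \tfrac14\sup_{s\le t}\|u^\eps_v(s)\|^2+4N\int_0^t\|\sigma\|_{L_2}^2\,ds,
\]
which avoids the random Gronwall coefficient altogether; after this single step the paper takes $\sup$, then expectation, applies BDG once, and closes with one Gronwall in expectation. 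The paper's route is slightly shorter and yields constants with linear $N$-dependence in the pre-exponential factors rather than $N$ in the Gronwall exponent; your route is more modular and transfers unchanged to situations where the control and the unknown cannot be separated so cleanly in the time integral. Both lead to \eqref{gep 2}.
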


 \begin{proof}
 Given $\eps>0$,
since $v\in \cala_N$,
by Girsanov\rq{}s theorem
we know that
$\widetilde{W}
=
W +\eps^{-\frac 12}\int_0^\cdot
v (t) dt$
is a 
cylindrical Wiener
process with identity  covariance operator
under the probability
$\widetilde{P}^\eps_{v}$ as given by
$$
 {\frac {d \widetilde{P}^\eps_{v}}{dP}}
 =
 \exp
 \left \{
 -\eps^{-\frac 12}
 \int_0^T v
 (t) dW
 -{\frac 12}\eps^{-1}
 \int_0^T \| v(t) \|_H^2 dt
 \right \} ,
 $$
 which implies that
 $u_{v}^\eps
 =\calg^\eps
 (\widetilde{W})
 =
 \calg^\eps
\left (
W +\eps^{-\frac 12}\int_0^\cdot
v (t) dt
\right )$
is the unique solution of 
\eqref{intr5}-\eqref{intr6}
with $W$ replaced by
$\widetilde{W}$.
In other words,
$u^\eps_{v}$ is the unique solution of
\eqref{gep 1} with initial condition
$
 u^\eps_{v} (0)=u_0$. 
 It remains to show  \eqref{gep 2}.

 By \eqref{gep 1} and It\^{o}\rq{}s formula, we
 have for all $t\in [0,T]$,  $P$-almost surely,
   $$ 
   \|  u^\eps_{v} (t) \|^2
   + 2 \nu \int_0^t  \| B u^\eps_{v} (s)\|^2ds
    + 2\int_0^t  (f( u^\eps_{v} (s)),  u^\eps_{v} (s))ds
   + 2 \gamma\
   \int_0^t  \|  u^\eps_{v} (s) \|^2 ds
   $$
   $$
   = \| u_0\|^2 +  2 
   \int_0^t (g(s),  u^\eps_{v} (s)) ds
   +2
   \int_0^t
    (\sigma (s,  u^\eps_{v} (s)) v(s),  u^\eps_{v}(s))
    ds
   $$
\be\label{gep p1}
   +\eps \int_0^t
   \| \sigma (s, u^\eps_v(s) )\|^2_{L_2(H,
   \ell^2)} ds
   +2\sqrt{\eps}
   \int_0^t
   (u^\eps_v (s), \sigma (s, u^\eps_v (s)) dW ).
\ee
   By   \eqref{f1} and \eqref{f2}
  we know
 \be\label{gep p2}
  2 \nu \int_0^t  \| B u^\eps_{v} (s)\|^2ds
    + 2\int_0^t  (f( u^\eps_{v} (s)),  u^\eps_{v} (s))ds
    \ge 0.
    \ee
   We also  have
 \be\label{gep p3} 
  2\int_0^t  |(g(s), u^\eps_{v}(s))| ds
  \le \int_0^ t  \|  u^\eps_{v} (s) \|^2 
  ds +  \int_0^t \| g(s) \|^2 ds .
  \ee
By \eqref{defsigma3}
  and \eqref{sigrow1} we  obtain
 $$
  2\int_0^t | (\sigma (s, u^\eps_v (s)) v(s), u^\eps_v (s))|
  ds
  \le 
    2\int_0^t 
  \| \sigma (s, u^\eps_v (s) )\|_{L(H,\ell^2)}
   \| u^\eps_v (s) \|\|v(s) \|_H ds
  $$
  $$
  \le
    2
    \left (
    \int_0^t 
  \| \sigma (s, u^\eps_v (s) )\|_{L(H,\ell^2)}^2
   \| u^\eps_v (s) \|^2  ds
   \right )^{\frac 12}
   \left (
   \int_0^t \| v(s) \|^2_H ds 
 \right )^{\frac 12}
 $$
   $$
  \le
    2N^{\frac 12}
    \sup_{0\le s \le t}
     \| u^\eps_v (s) \|
    \left (
    \int_0^t 
  \| \sigma (s, u^\eps_v (s) )\|_{L(H,\ell^2)}^2
   ds
   \right )^{\frac 12}
  $$
  $$
  \le
  {\frac 14}     
    \sup_{0\le s \le t}
     \| u^\eps_v (s) \|^2
     +
  4N  
    \int_0^t 
  \| \sigma (s, u^\eps_v (s) )\|_{L(H,\ell^2)}^2
   ds 
  $$
  $$
  \le
  {\frac 14}     
    \sup_{0\le s \le t}
     \| u^\eps_v (s) \|^2
     +
  4N    \sum_{k=1}^\infty
    \int_0^t 
      \| h_k(s) + \sigma_k (u^\eps_v (s)) \|^2 
   ds 
  $$
   $$
  \le
  {\frac 14}     
    \sup_{0\le s \le t}
     \| u^\eps_v (s) \|^2
     +
  8N T   \sum_{k=1}^\infty
      \| h_k\|_{L^\infty(0,T; \ell^2)}^2
         +
  8N    \sum_{k=1}^\infty
    \int_0^t 
      \|   \sigma_k (u^\eps_v (s)) \|^2 
   ds 
   $$
   $$
  \le
  {\frac 14}     
    \sup_{0\le s \le t}
     \| u^\eps_v (s) \|^2
     +
  8N T   \sum_{k=1}^\infty
      \| h_k\|_{L^\infty(0,T; \ell^2)}^2
      $$
    \be\label{gep p4}
         +
  16 N  \alpha^2 \| \delta\|^2 T
  +  16 N  \alpha^2 \| \delta\|^2 
    \int_0^t 
      \|   u^\eps_v (s ) \|^2 
   ds .
   \ee
 Similarly,
by \eqref{defsigma3}
  and \eqref{sigrow1} we have
  for all $\eps\in (0,1)$,
    \be\label{gep p5}
     \eps \int_0^t
   \| \sigma (s, u^\eps_v(s) )\|^2_{L_2(H,
   \ell^2)} ds
  \le
  2 T   \sum_{k=1}^\infty
      \| h_k\|_{L^\infty(0,T; \ell^2)}^2
         +
 4  \alpha^2 \| \delta\|^2 T
  +  4 \alpha^2 \| \delta\|^2 
    \int_0^t 
      \|   u^\eps_v (s ) \|^2 
   ds .
   \ee
   It follows from
   \eqref{gep p1}-\eqref{gep p5} that
   for all $t\in [0,T]$,  $P$-almost surely,
   $$ 
   \|  u^\eps_{v} (t) \|^2
   \le
    \| u_0\|^2
    +
    \left (
    1-2\gamma +
    4\alpha^2 \| \delta \|^2
    (1+ 4N)
    \right ) 
   \int_0^t  \|  u^\eps_{v} (s) \|^2 ds
   $$
     $$
+
  {\frac 14}     
    \sup_{0\le s \le t}
     \| u^\eps_v (s) \|^2
     +
     4\alpha^2 \| \delta \|^2 T
     (1+4N)
     +2T( 1+4 N)
    \sum_{k=1}^\infty
      \| h_k\|_{L^\infty(0,T; \ell^2)}^2
        $$
    $$
           +\int_0^t \| g(s) \|^2 ds
   +2\sqrt{\eps}
   \int_0^t
   (u^\eps_v (s), \sigma (s, u^\eps_v (s)) dW ),
$$
   which implies that
    for all $t\in [0,T]$,   
   $$ 
  {\frac 34}
  \E \left (
   \sup_{0\le s\le t}
   \|  u^\eps_{v} (s) \|^2
   \right )
   \le
    \| u_0\|^2
    +
    \left (
    1-2\gamma +
    4\alpha^2 \| \delta \|^2
    (1+ 4N)
    \right ) \E \left (
   \int_0^t  \|  u^\eps_{v} (s) \|^2 ds
   \right )
   $$
     $$ 
     +
     4\alpha^2 \| \delta \|^2 T
     (1+4N)
     +2T( 1+4 N)
    \sum_{k=1}^\infty
      \| h_k\|_{L^\infty(0,T; \ell^2)}^2
        $$
     \be\label{gep p6} 
           +\int_0^t \| g(s) \|^2 ds
   +
   \E \left (
   \sup_{0\le r\le t}
  \left | \int_0^r  2\sqrt{\eps}
   (u^\eps_v (s), \sigma (s, u^\eps_v (s)) dW (s))
   \right |
   \right ).
\ee
     For the last term in
   \eqref{gep p6}, by the 
   Burkholder inequality we get
   for $\eps\in (0,1)$,
 $$
   \E\left (
   \sup_{0\le r\le t}
  \left | \int_0^r  2\sqrt{\eps}
   (u^\eps_v (s), \sigma (s, u^\eps_v (s)) dW (s))
   \right |
   \right )
   $$
   $$
   \le
   6
   \E \left (
   \left (
   \int_0^t 
  \|  u^\eps_v (s) \|^2 
\|   \sigma (s, u^\eps_v (s))\|_{L_2(H,\ell^2)}^2
ds
   \right )^{\frac 12}
   \right )
   $$
     $$
   \le
   {\frac 14}
   \E
   \left (
   \sup_{0\le s\le t} \| u^\eps_v (s)\|^2
   \right )
   +
   36
   \E \left ( 
   \int_0^t   
\|   \sigma (s, u^\eps_v (s))\|_{L_2(H,\ell^2)}^2
ds 
   \right ),
   $$
  which along with \eqref{gep p5}
  shows that 
  $$
   \E\left (
   \sup_{0\le r\le t}
  \left | \int_0^r  2\sqrt{\eps}
   (u^\eps_v (s), \sigma (s, u^\eps_v (s)) dW (s))
   \right |
   \right )
   \le
    {\frac 14}
   \E
   \left (
   \sup_{0\le s\le t} \| u^\eps_v (s)\|^2
   \right )
   $$
    \be\label{gep p7}
    +
  72T   \sum_{k=1}^\infty
      \| h_k\|_{L^\infty(0,T; \ell^2)}^2
         +
 144 \alpha^2 \| \delta\|^2 T
  + 14 4 \alpha^2 \| \delta\|^2 
    \E \left (
    \int_0^t 
      \|   u^\eps_v (s ) \|^2 
   ds
   \right )  .
   \ee
  By \eqref{gep p6} and \eqref{gep p7} we get
    for all $t\in [0,T]$,   
   $$  
  \E \left (
   \sup_{0\le s\le t}
   \|  u^\eps_{v} (s) \|^2
   \right )
   \le
    2\| u_0\|^2
    +2
    \left (
    1-2\gamma +
    4\alpha^2 \| \delta \|^2
    (37+ 4N)
    \right ) 
   \int_0^t \E \left (
   \sup_{0\le r \le s } \|  u^\eps_{v} (r) \|^2 
   \right ) dr
   $$
       \be\label{gep p8} 
     +
     8\alpha^2 \| \delta \|^2 T
     (37+4N)
     +4T( 37+4 N)
    \sum_{k=1}^\infty
      \| h_k\|_{L^\infty(0,T; \ell^2)}^2
  +2\int_0^T \| g(s) \|^2 ds.
  \ee
  By \eqref{gep p8} and
  Gronwall\rq{}s lemma we obtain
  for all $t\in [0,T]$,
        \be\label{gep p9} 
  \E \left (
   \sup_{0\le s\le t}
   \|  u^\eps_{v} (s) \|^2
   \right )
   \le
   c_1 e^{c_2 t},
\ee
   where
   $$
   c_1=
     2\| u_0\|^2
       +
     8\alpha^2 \| \delta \|^2 T
     (37+4N)
     +4T( 37+4 N)
    \sum_{k=1}^\infty
      \| h_k\|_{L^\infty(0,T; \ell^2)}^2
  +2\int_0^T \| g(s) \|^2 ds,
  $$
  and
  $$ c_2
  =
    2
    \left (
    1-2\gamma +
    4\alpha^2 \| \delta \|^2
    (37+ 4N)
    \right ) .
    $$
  Then \eqref{gep 2} follows from \eqref{gep p9}
  for $t=T$.
 \end{proof}

 We now prove $\calg^\eps$
 and $\calg^0$ satisfy
 ({\bf {H1}}).

 \begin{lem}\label{prh2}
 Suppose that
 \eqref{F1}, \eqref{sigma1}-\eqref{delta1}
 and \eqref{gh} hold, and
 $\{v^\eps\}  \subseteq \cala_N$
for some  $N<\infty$.
If $\{v^\eps\} $
converges in distribution
to $v$ as $S_N$-valued random variables,
then
$ 
\calg^\eps
\left (
W +\eps^{-\frac 12}\int_0^\cdot
v^\eps (t) dt
\right )$
converges
to $\calg^0\left (
\int_0^\cdot
v(t) dt
\right )$
in $ C([0,T], \ell^2)$
in distribution.
\end{lem}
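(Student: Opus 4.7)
The plan is to avoid a full tightness argument by comparing the stochastic controlled solution with its deterministic counterpart driven by the same (random) control. Set
\[
u^\eps_{v^\eps} := \calg^\eps\!\left(W+\eps^{-\frac 12}\int_0^\cdot v^\eps(s)\,ds\right),
\]
which by Lemma \ref{gep} is the unique solution of the stochastic controlled equation \eqref{gep 1} with control $v^\eps$, and let $u_{v^\eps}$ denote the solution of the deterministic controlled system \eqref{contr1}-\eqref{contr2} with the same random control $v^\eps$, as furnished by Lemma \ref{exis_sol}. The strategy is to show that the residual $Z^\eps := u^\eps_{v^\eps} - u_{v^\eps}$ converges to zero in $L^2(\Omega;C([0,T],\ell^2))$; the conclusion will then follow from continuity of the deterministic skeleton and Slutsky's theorem.

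To establish $Z^\eps\to 0$, I would apply It\^o's formula to $\|Z^\eps(t)\|^2$. The drift-difference contribution $-2(f(u^\eps_{v^\eps})-f(u_{v^\eps}),Z^\eps)$ is non-positive by the monotonicity \eqref{f2}, and the local-Lipschitz bound \eqref{sigrow2} for $\sigma$ applied on the ball whose radius is furnished by Lemma \ref{gep} and \eqref{exis_sol 2} controls the diffusion cross term. This leads to an estimate of the form
\[
\|Z^\eps(t)\|^2 \le \int_0^t\bigl(c_0 + c_1\|\delta\|\,\|v^\eps(s)\|_H\bigr)\|Z^\eps(s)\|^2\,ds + R^\eps(t),
\]
where $R^\eps(t) := \eps\int_0^t\|\sigma(s,u^\eps_{v^\eps}(s))\|_{L_2(H,\ell^2)}^2\,ds + M^\eps(t)$ and $M^\eps$ is the It\^o martingale $2\sqrt\eps\int_0^\cdot(Z^\eps,\sigma(s,u^\eps_{v^\eps})\,dW)$. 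Since $v^\eps\in\cala_N$ yields the $P$-almost sure bound $\int_0^T\|v^\eps(s)\|_H^2\,ds\le N$, pathwise Gronwall produces
\[
\sup_{0\le t\le T}\|Z^\eps(t)\|^2 \le \Bigl(\eps\int_0^T\|\sigma(s,u^\eps_{v^\eps})\|_{L_2(H,\ell^2)}^2\,ds+\sup_{0\le t\le T}|M^\eps(t)|\Bigr) e^{c_0T+c_1\|\delta\|\sqrt{TN}}.
\]
The linear-growth bound \eqref{sigrow1} together with Lemma \ref{gep} gives expectation $O(\eps)$ for the first term in the parenthesis, and Burkholder-Davis-Gundy combined with the same a priori bound yields expectation $O(\sqrt\eps)$ for the second; hence $\E\sup_t\|Z^\eps(t)\|^2=O(\sqrt\eps)\to 0$.

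For the limit identification, Lemma \ref{wc_sol} shows that the deterministic skeleton map $\Phi:v\mapsto u_v$ is continuous from $S_N$ with its weak topology (metrizable on this bounded set) to $C([0,T],\ell^2)$ with the strong topology. Since $v^\eps\to v$ in distribution as $S_N$-valued random variables, the continuous mapping theorem gives $u_{v^\eps}=\Phi(v^\eps)\to\Phi(v)=u_v$ in distribution in $C([0,T],\ell^2)$. Slutsky's theorem applied to $u^\eps_{v^\eps}=u_{v^\eps}+Z^\eps$ with $Z^\eps\to 0$ in probability then yields $u^\eps_{v^\eps}\to u_v=\calg^0\!\left(\int_0^\cdot v(t)\,dt\right)$ in distribution, which is condition ({\bf H1}). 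The principal technical obstacle is the random cross term $(\sigma(t,u^\eps_{v^\eps})-\sigma(t,u_{v^\eps}))v^\eps$ in the It\^o estimate: the monotonicity of $f$ is essential to absorb the polynomial nonlinearity, and the $P$-a.s.\ $S_N$-bound on $v^\eps$ is essential for keeping the Gronwall exponent deterministic and uniform in $\omega$ and $\eps$.
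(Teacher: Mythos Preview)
Your overall strategy---splitting $u^\eps_{v^\eps}=u_{v^\eps}+Z^\eps$ and then invoking continuity of the skeleton map---coincides with the paper's, and your handling of the second step via the continuous mapping theorem (in place of the paper's Skorokhod representation) is a clean and equivalent alternative.

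There is, however, a genuine gap in the first step. When you write
\[
\|Z^\eps(t)\|^2 \le \int_0^t\bigl(c_0 + c_1\|\delta\|\,\|v^\eps(s)\|_H\bigr)\|Z^\eps(s)\|^2\,ds + R^\eps(t),
\]
the constant $c_1$ comes from the \emph{local} Lipschitz bound \eqref{sigrow2}, which requires $\|u^\eps_{v^\eps}(s)\|\le R$ and $\|u_{v^\eps}(s)\|\le R$ for a fixed $R$. The deterministic solution $u_{v^\eps}$ is indeed pathwise bounded by \eqref{exis_sol 2}, but Lemma~\ref{gep} only gives a \emph{second-moment} bound $\E\bigl[\|u^\eps_{v^\eps}\|_{C([0,T],\ell^2)}^2\bigr]\le C_2$; it does not furnish a pathwise bound on $u^\eps_{v^\eps}$. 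Consequently the Lipschitz constant $L_R$ entering $c_1$ is actually random and $\omega$-dependent, and the exponent $e^{c_0T+c_1\|\delta\|\sqrt{TN}}$ in your Gronwall conclusion is not uniformly controlled. Taking expectations of the product then fails: you cannot separate the random Gronwall factor from the martingale term, and the BDG step does not close.

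The paper repairs exactly this by introducing the stopping time $\tau^\eps=\inf\{t:\|u^\eps_{v^\eps}(t)\|\ge M\}\wedge T$. On $[0,\tau^\eps]$ one has the pathwise bound $\|u^\eps_{v^\eps}\|\le M$, so $c_1=c_1(M)$ is deterministic and the Gronwall/BDG argument goes through to give $\sup_{t\le T}\|Z^\eps(t\wedge\tau^\eps)\|\to 0$ in probability. The moment bound of Lemma~\ref{gep} is then used via Chebyshev to show $P(\tau^\eps<T)\le C_2/M^2$, and one lets $M\to\infty$ after $\eps\to 0$. Inserting this localization into your argument would close the gap.
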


\begin{proof}
Let 
 $u_{v^\eps}^\eps 
 =
 \calg^\eps
\left (
W +\eps^{-\frac 12}\int_0^\cdot
v^\eps (t) dt
\right )$. Then by Lemma 
\ref{gep} we see that
$u_{v^\eps}^\eps $  
is the   solution to the equation: 
 \be\label{prh2 p1}
 d   u_{v^\eps}^\eps  
 + \left (  \nu Au^\eps_{v^\eps}     
 +f(u^\eps_{v^\eps}  )  
+ \gamma  u^\eps_{v^\eps}  
\right )  dt  =
\left (
g (t)  
+\sigma(t, u^\eps_{v^\eps}  )  v^\eps
\right )  dt
 + \sqrt{\eps} \sigma(t, u^\eps_{v^\eps}  )  
   dW   ,
\ee
 with  initial data
 \be\label{prh2 p2}
 u^\eps_{v^\eps} (0)=u_0\in\ell^2.
\ee
Let $u_v=
  \calg^0\left (
\int_0^\cdot
v(t) dt
\right )   $. Then
  $u_v$
is the solution 
 to \eqref{contr1}-\eqref{contr2}.
So we only need to show
$u^\eps_{v^\eps} $ converges to $u_v$
in $C([0,T], \ell^2)$
in distribution as $\eps \to 0$.
To that end, we first  establish the convergence
of 
$u^\eps_{v^\eps} - u_{v^\eps}$
with
$u_{v^\eps}=  \calg^0\left (
\int_0^\cdot
v^\eps (t) dt \right ) $
as in \cite{brz1}.

By  
\eqref{contr1}-\eqref{contr2} we have 
    \be\label{prh2 p3}
{\frac {d   u_{v^\eps} }{dt}}
=-  \nu Au_ {v^\eps}    
-f(u_ {v^\eps} )  
- \gamma  u_ {v^\eps}    
+
g (t)  
 +   \sigma(t, u_ {v^\eps} )  
    {v^\eps} ,
\ee
 with  initial data
  \be\label{prh2 p4}
 u_{v^\eps}  (0)=u_0 \in\ell^2.
 \ee
  By \eqref{prh2 p1}-\eqref{prh2 p4} we have
   $$
 d  ( u_{v^\eps}^\eps -  u_{v^\eps}  )
 +   \nu A ( u^\eps_{v^\eps}  -  u_{v^\eps  }
 ) dt 
 +
   ( f(u^\eps_{v^\eps}  ) -  f(u_{v^\eps}  ) ) dt
+ \gamma (  u^\eps_{v^\eps}  
-  u_{v^\eps } ) dt
$$
  \be\label{prh2 p5}
  =
\left ( 
 \sigma(t, u^\eps_{v^\eps}  )  v^\eps
 -
  \sigma(t, u_{v^\eps}  )  v^\eps
\right )  dt
 + \sqrt{\eps} \sigma(t, u^\eps_{v^\eps}  )  
   dW  ,
\ee
with  
$u_{v^\eps}^\eps(0) -  u_{v^\eps} (0)=0$.
By \eqref{prh2 p5} and It\^{o}\rq{}s formula we obtain
$$
 \|  u_{v^\eps}^\eps(t) -  u_{v^\eps}(t) \|^2
+2 \nu
\int_0^t
\| B ( u^\eps_{v^\eps}(s)  -  u_{v^\eps} (s)
 ) \|^2  ds
 + 2
 \int_0^t
  ( f(u^\eps_{v^\eps}  ) -  f(u_{v^\eps}),
  \ u^\eps_{v^\eps}  -  u_{v^\eps}  ) ds
  $$
  $$
  + 2  \gamma
  \int_0^t
  \| u^\eps_{v^\eps}(s)  -  u_{v^\eps}(s)
  \|^2  ds
  =
  2 
  \int_0^t
  \left ( 
 \sigma(s, u^\eps_{v^\eps} (s) )  v^\eps(s)
 -
  \sigma(s, u_{v^\eps} (s)  )  v^\eps (s),
  \  u^\eps_{v^\eps}(s)  -  u_{v^\eps}(s)
\right )  dt
$$
   \be\label{prh2 p6}
+ \eps
\int_0^t
 \|   \sigma(s, u^\eps_{v^\eps} (s)  )\|
_{L_2(H, \ell^2)}^2  ds
+ 
2\sqrt{\eps} \int_0^t
\left (
u^\eps_{v^\eps}(s)  -  u_{v^\eps}(s),\
 \sigma(s, u^\eps_{v^\eps} (s)  ) 
   dW  \right ) .
\ee
By \eqref{f2}  and
\eqref{prh2 p6} we obtain
$$
 \|  u_{v^\eps}^\eps(t) -  u_{v^\eps}(t) \|^2
   + 2  \gamma
  \int_0^t
  \| u^\eps_{v^\eps}(s)  -  u_{v^\eps}(s)
  \|^2  ds
  $$
  $$
  \le 
  2 
  \int_0^t
  \left ( 
 \sigma(s, u^\eps_{v^\eps} (s) )  v^\eps(s)
 -
  \sigma(s, u_{v^\eps} (s)  )  v^\eps (s),
  \  u^\eps_{v^\eps}(s)  -  u_{v^\eps}(s)
\right )  dt
$$
   \be\label{prh2 p6a}
+ \eps
\int_0^t
 \|   \sigma(s, u^\eps_{v^\eps} (s)  )\|
_{L_2(H, \ell^2)}^2  ds
+ 
2\sqrt{\eps} \int_0^t
\left (
u^\eps_{v^\eps}(s)  -  u_{v^\eps}(s),\
 \sigma(s, u^\eps_{v^\eps} (s)  ) 
   dW  \right ) .
\ee
 For a fixed $M>0$, define a stopping time
 \be\label{prh2 p7}
 \tau^\eps 
 =\inf \left \{
 t\ge 0: \| u^\eps_{v^\eps} (t) \| \ge M
 \right \}\wedge T.
 \ee
 As usual,  the infimum of the empty set is taken
 to be $\infty$.
 Since $\gamma \le 0$,
 by  \eqref{prh2 p6a}-\eqref{prh2 p7} we get,
 for all $t\in [0,T]$,
 $$ 
 \sup_{0\le r \le t}
 \|  u_{v^\eps}^\eps(r\wedge \tau^\eps)
  -  u_{v^\eps}( r\wedge \tau^\eps) \|^2
 \le - 2  \gamma
  \int_0^{t \wedge \tau^\eps}
  \| u^\eps_{v^\eps}(s)  -  u_{v^\eps}(s)
  \|^2  ds
  $$
  $$
  + 
  2  
  \int_0^{ t\wedge \tau^\eps}
  |\left ( 
 \sigma(s, u^\eps_{v^\eps} (s) )  v^\eps(s)
 -
  \sigma(s, u_{v^\eps} (s)  )  v^\eps (s),
  \  u^\eps_{v^\eps}(s)  -  u_{v^\eps}(s)
\right )|   dt
$$
   \be\label{prh2 p8}
+ \eps
\int_0^ { t\wedge \tau^\eps}
 \|   \sigma(s, u^\eps_{v^\eps} (s)  )\|
_{L_2(H, \ell^2)}^2  ds
+ 
2\sqrt{\eps}
\sup_{0\le r\le t}
 \left |\int_0^ { r\wedge \tau^\eps}
\left (
u^\eps_{v^\eps}(s)  -  u_{v^\eps}(s),\
 \sigma(s, u^\eps_{v^\eps} (s)  ) 
   dW  \right )
   \right | .
\ee
  
  For the first term on the right-hand side of
  \eqref{prh2 p8} we have
   \be\label{prh2 p9}
    - 2  \gamma 
  \int_0^{t \wedge \tau^\eps}
  \| u^\eps_{v^\eps}(s)  -  u_{v^\eps}(s)
  \|^2  ds
\le
 - 2  \gamma
  \int_0^{t  }
  \sup_{0\le r\le s}
  \| u^\eps_{v^\eps}({r\wedge \tau^\eps} ) 
   -  u_{v^\eps}( {r\wedge \tau^\eps})
  \|^2  ds.
  \ee
  To deal with the 
second term on the right-hand side of
  \eqref{prh2 p8},  we find from   
   \eqref{exis_sol 2}  that
  there exists a positive number
  $c_1=c_1 (N)$ such that
  $P$-almost surely,
  \be\label{prh2 p10}
  \sup_{\eps\in (0,1]}
  \ \sup_{t\in [0,T]}
  \|u_{v^\eps} (t) \|\le c_1.
  \ee
  Then by
  \eqref{sigrow2},
  \eqref{defsigma2},
    \eqref{prh2 p7}
  and \eqref{prh2 p10} we 
  infer that
  there exists
  $c_2=c_2 (N, M)>0$ such that
   $$  
  2  
  \int_0^{ t\wedge \tau^\eps}
  |\left ( 
 \sigma(s, u^\eps_{v^\eps} (s) )  v^\eps(s)
 -
  \sigma(s, u_{v^\eps} (s)  )  v^\eps (s),
  \  u^\eps_{v^\eps}(s)  -  u_{v^\eps}(s)
\right )|   ds 
$$
  $$  
 \le
  2 
  \int_0^{ t\wedge \tau^\eps}
\|
 \sigma(s, u^\eps_{v^\eps} (s) )  v^\eps(s)
 -
  \sigma(s, u_{v^\eps} (s)  )  v^\eps (s)
  \|
  \|  u^\eps_{v^\eps}(s)  -  u_{v^\eps}(s)
\|   ds 
$$ 
 $$  
 \le
    c_2 \| \delta \|  
  \int_0^{ t\wedge \tau^\eps}
\|   v^\eps(s)\|_H 
  \|  u^\eps_{v^\eps}(s)  -  u_{v^\eps}(s)
\|^2    ds 
$$
   \be\label{prh2 p11}
 \le
    c_2 \| \delta \|  
  \int_0^{ t }
\|   v^\eps(s)\|_H 
 \sup_{0\le r\le s}
  \|  u^\eps_{v^\eps}(r \wedge \tau^\eps) 
   -  u_{v^\eps}( r \wedge \tau^\eps)
\|^2    ds .
\ee

It follows from
\eqref{prh2 p8},
\eqref{prh2 p9}
and
\eqref{prh2 p11} that
for all $t\in [0,T]$,
$P$-almost surely,
 $$ 
 \sup_{0\le r \le t}
 \|  u_{v^\eps}^\eps(r\wedge \tau^\eps)
  -  u_{v^\eps}( r\wedge \tau^\eps) \|^2 
  $$
  $$
  \le 
  \int_0^{ t }
 \left (  c_2 \| \delta \|  
 \|   v^\eps(s)\|_H  -2\gamma
 \right )
 \sup_{0\le r\le s}
  \|  u^\eps_{v^\eps}(r \wedge \tau^\eps) 
   -  u_{v^\eps}( r \wedge \tau^\eps)
\|^2    ds 
  $$
    \be\label{prh2 p11a}
+ \eps
\int_0^ { T\wedge \tau^\eps}
 \|   \sigma(s, u^\eps_{v^\eps} (s)  )\|
_{L_2(H, \ell^2)}^2  ds
+ 
2\sqrt{\eps}
\sup_{0\le r\le T}
 \left |\int_0^ { r\wedge \tau^\eps}
\left (
u^\eps_{v^\eps}(s)  -  u_{v^\eps}(s),\
 \sigma(s, u^\eps_{v^\eps} (s)  ) 
   dW  \right )
   \right | .
\ee
  By \eqref{prh2 p11a} and Gronwall\rq{}s lemma,
  we get
  for all $t\in [0,T]$,
$P$-almost surely,
$$ 
 \sup_{0\le r \le t}
 \|  u_{v^\eps}^\eps(r\wedge \tau^\eps)
  -  u_{v^\eps}( r\wedge \tau^\eps) \|^2 
  $$
  \be\label{prh2 p11b}
\le  \eps c_3
\int_0^ { T\wedge \tau^\eps}
 \|   \sigma(s, u^\eps_{v^\eps} (s)  )\|
_{L_2(H, \ell^2)}^2  ds
+ 
2\sqrt{\eps}c_3
\sup_{0\le r\le T}
 \left |\int_0^ { r\wedge \tau^\eps}
\left (
u^\eps_{v^\eps}(s)  -  u_{v^\eps}(s),\
 \sigma(s, u^\eps_{v^\eps} (s)  ) 
   dW  \right )
   \right | ,
\ee
where 
$ 
c_3= 
e^{c_2 \| \delta \| T^{\frac 12} N^{\frac 12}  
  -2\gamma T
} $.

For the first term on the right-hand side of 
  \eqref{prh2 p11b},   
  by \eqref{sigrow1}
  and \eqref{defsigma3} we get 
  $$
   \eps
\int_0^ { T\wedge \tau^\eps}
 \|   \sigma(s, u^\eps_{v^\eps} (s)  )\|
_{L_2(H, \ell^2)}^2  ds
\le
2\eps
\sum_{k=1}^\infty
\int_0^ { T\wedge \tau^\eps}
\left (
\|h_k(s) \|^2
+\| \sigma_k (u^\eps_{v^\eps} (s)  )\|^2
\right ) ds
$$
$$
\le 2\eps
T
\sum_{k=1}^\infty
\| h_k\|^2_{L^\infty(0,T; \ell^2)}
+
4\eps \alpha^2
\|\delta\|^2 T
+
  4\eps \alpha^2
\|\delta\|^2 
    \int_0^ { T \wedge \tau^\eps}
    \| u^\eps_{v^\eps} (s)\|^2 ds
    $$
   \be\label{prh2 p12}
\le 2\eps
T
\sum_{k=1}^\infty
\| h_k\|^2_{L^\infty(0,T; \ell^2)}
+
4\eps \alpha^2
\|\delta\|^2 T
+
  4\eps \alpha^2
\|\delta\|^2 M^2T. 
  \ee
  By \eqref{prh2 p12} we see that
     \be\label{prh2 p13}
     \lim_{\eps \to 0}
       \eps c_3 
\int_0^ { T\wedge \tau^\eps}
 \|   \sigma(s, u^\eps_{v^\eps} (s)  )\|
_{L_2(H, \ell^2)}^2  ds
=0,\quad \text{P-almost surely}.
\ee
For the last term on the right-hand side of
\eqref{prh2 p11b},
by \eqref{prh2 p10},
\eqref{prh2 p12}
and
Doob\rq{}s maximal inequality we  obtain
$$
\E\left (
 \sup_{0\le r\le T}
 \left |\int_0^ { r\wedge \tau^\eps}
2\sqrt{\eps} \left (
u^\eps_{v^\eps}(s)  -  u_{v^\eps}(s),\
 \sigma(s, u^\eps_{v^\eps} (s)  ) 
   dW  \right )
   \right |^2
   \right )
$$
$$
\le 16 \eps
\E\left ( 
  \int_0^ { T\wedge \tau^\eps}
 \|
u^\eps_{v^\eps}(s)  -  u_{v^\eps}(s)\|^2
 \| \sigma(s, u^\eps_{v^\eps} (s)   )\|^2_{L_2(H,\ell^2)}
 ds 
   \right )
$$
$$
\le 16 \eps
(M+c_1)^2
\E\left ( 
  \int_0^ { T\wedge \tau^\eps}
 \| \sigma(s, u^\eps_{v^\eps} (s)   )\|^2_{L_2(H,\ell^2)}
 ds 
   \right )
$$
 $$
\le 32\eps 
(M+c_1)^2 
 \left (
T
\sum_{k=1}^\infty
\| h_k\|^2_{L^\infty(0,T; \ell^2)}
+
2 \alpha^2
\|\delta\|^2 T
+
 2 \alpha^2
\|\delta\|^2 M^2T
\right ),
$$
and hence
  \be\label{prh2 p14}
\lim_{\eps \to 0}
\E\left ( 
 \sup_{0\le r\le T}
 \left |
 2\sqrt{\eps} c_3
 \int_0^ { r\wedge \tau^\eps}
 \left (
u^\eps_{v^\eps}(s)  -  u_{v^\eps}(s),\
 \sigma(s, u^\eps_{v^\eps} (s)  ) 
   dW  \right )
   \right |^2
   \right ) =0.
\ee

By \eqref{prh2 p11b},
\eqref{prh2 p13} and
\eqref{prh2 p14}, we get
  \be\label{prh2 p15}
 \lim_{\eps \to 0}
 \sup_{0\le t \le T}
 \|  u_{v^\eps}^\eps(t\wedge \tau^\eps)
  -  u_{v^\eps}( t\wedge \tau^\eps) \|^2 
  =0
  \quad \text{in probability}.
\ee

 On the other hand, by
 \eqref{prh2 p7}
 and \eqref{gep 2}  we have
 for all $\eps \in (0,1)$,
   \be\label{prh2 p16}
 P(\tau^\eps <T)
 =
 P \left (
 \sup_{t\in [0,T]}
 \| u^\eps_{v^\eps} (t) \| >M 
 \right )
 \le
 {\frac 1{M^2}}
 \E \left (
 \sup_{t\in [0,T]}
 \| u^\eps_{v^\eps} (t) \|^2 
 \right )
  \le
 {\frac {C_2}{M^2}},
\ee
 where $C_2=C_2(T, N)>0$.
 It follows  from
 \eqref{prh2 p16}
 that
 for every $\eta>0$,
 $$
 P 
\left (
\sup_{0\le t \le T}
 \|  u_{v^\eps}^\eps(t )
  -  u_{v^\eps}( t ) \| >\eta
\right )
$$
$$
 \le P 
\left (
\sup_{0\le t \le T}
 \|  u_{v^\eps}^\eps(t )
  -  u_{v^\eps}( t ) \| >\eta,
  \  \tau^\eps =T
\right )
+
  P 
\left (
\sup_{0\le t \le T}
 \|  u_{v^\eps}^\eps(t )
  -  u_{v^\eps}( t ) \| >\eta,
  \  \tau^\eps <T
\right )
$$
$$
 \le P 
\left (
\sup_{0\le t \le T}
 \|  u_{v^\eps}^\eps(t \wedge  \tau^\eps  )
  -  u_{v^\eps}( t \wedge  \tau^\eps) \| >\eta  
\right )
+
   {\frac {C_2}{M^2}}.
$$
First taking the limit as $\eps \to 0$, and then
as 
$M\to \infty$, we get
from  \eqref{prh2 p15} that 
 \be\label{prh2 p20}
 \lim_{\eps \to 0} 
 ( u_{v^\eps}^\eps
  -   u_{v^\eps}  ) =0 
  \ \text{ in  } {C([0,T],\ell^2)}
\  \text{ in probability}.
\ee

Since $\{v^\eps\}$ converges
in distribution to $v$ as 
$S_N$-valued random  elements,
by Skorokhod\rq{}s theorem,
there exists a probability
space $(\widetilde{\Omega},
\widetilde{\calf}, \widetilde{P})$
and 
$S_N$-valued random variables
$\widetilde{v}^\eps$
and $\widetilde{v}$ on
$(\widetilde{\Omega},
\widetilde{\calf}, \widetilde{P})$
such that
$  \widetilde{v}^\eps$ and $
 \widetilde{v} $
 have the same distribution laws as
 $  {v^\eps}$ and $
  {v} $ respectively,  and
  $ \{\widetilde{v}^\eps\}$
 converges to
 $  \widetilde{v} $ 
  almost surely
  in $S_N$ which is equipped with the
  weak topology.
  Then by
  Lemma \ref{wc_sol} we infer that
  $$
  u_{\widetilde{v} ^\eps}
  \to u_{\widetilde{v}}
  \ \text{in } C([0,T], \ell^2)
  \ \text{almost surely},
 $$
 and hence
\be\label{prh2 p21}
  u_{\widetilde{v} ^\eps}
  \to u_{\widetilde{v}}
  \ \text{in } C([0,T], \ell^2)
  \ \text{in distribution}.
\ee
By \eqref{prh2 p21} we know 
\be\label{prh2 p22}
  u_{ {v} ^\eps}
  \to u_{ {v}}
  \ \text{in } C([0,T], \ell^2)
  \ \text{in distribution}.
\ee
By \eqref{prh2 p20}
and
\eqref{prh2 p22} we immediately get 
$$
  u^\eps_{ {v} ^\eps}
  \to u_{ {v}}
  \ \text{ in } C([0,T], \ell^2)
  \ \text{in distribution},
$$
which concludes the proof.  
\end{proof}

 As  an immediate  consequence of
 Proposition \ref{LP1}
 and Lemmas \ref{prh1} and  \ref{prh2}, we 
 finally obtain 
 Theorem \ref{main_resu}, 
 the main result of this paper.

%
%
%

\end{document}